\newtheorem{theorem}{Theorem}[section]
\theoremstyle{plain}
\newtheorem{claim}{Claim}
\newtheorem{lemma}{Lemma}[section]
\newtheorem{proposition}{Proposition}[section]
\numberwithin{equation}{section}
\theoremstyle{definition}
\newtheorem{definition}{Definition}
\theoremstyle{remark}
\newtheorem{remark}{Remark}[section]
\def\e{\varepsilon}
\def\al{\alpha}
\def\pd{\partial}
\def\re{\mathbb{R}}
\def\la{\lambda}
\def\La{\Lambda}
\def\mc{\mathcal}
\def\mbb{\mathbb}
\DeclarePairedDelimiter{\floor}{\lfloor}{\rfloor}
\newcommand{\eqal}[1]{\begin{equation}\begin{aligned}#1\end{aligned}\end{equation}}
\newcommand{\es}[1]{\begin{align*}#1\end{align*}}
\title[Lagrangian mean curvature type equations]{
Optimal regularity for Lagrangian mean curvature type equations
}
\author{Arunima Bhattacharya}
\address{Department of Mathematics, Phillips Hall\\
 the University of North Carolina at Chapel Hill, NC }
\email{arunimab@unc.edu}
\author{Ravi Shankar}
\address{Department of Mathematics, Fine Hall\\
Princeton University, Princeton, NJ}
\email{rs1838@princeton.edu}
\begin{document}


\maketitle

\begin{abstract}
We classify regularity for Lagrangian mean curvature type equations, which include the potential equation for prescribed Lagrangian mean curvature and those for Lagrangian mean curvature flow self-shrinkers and expanders, translating solitons, and rotating solitons. Convex solutions of the second boundary value problem for certain such equations were constructed by Brendle-Warren 2010, Huang 2015, and Wang-Huang-Bao 2023.  We first show that convex viscosity solutions are regular provided the Lagrangian angle or phase is $C^2$ and convex in the gradient variable.  We next show that for merely H\"older continuous phases, convex solutions are regular if they are $C^{1,\beta}$ for sufficiently large $\beta$. Singular solutions are given to show that each condition is optimal and that the H\"older exponent is sharp. Along the way, we generalize the constant rank theorem of Bian and Guan to include arbitrary dependence on the Legendre transform.
\end{abstract}

\section{Introduction}

In this paper, we classify regularity for convex viscosity solutions of Lagrangian mean curvature type equations
\eqal{
\label{slag}
\sum_{i=1}^n\arctan\lambda_i=\psi(x,u,Du)\in[0,n\pi/2]
}
where $\lambda_i$'s are the eigenvalues of the Hessian of $u$. The left-hand side $\theta(x):=tr\arctan D^2u(x)$ is called the Lagrangian phase or angle of the Lagrangian submanifold $(x,Du(x))\subset \mbb C^n=\re^n\times\re^n$. 

\smallskip
The geometric motivation for studying Lagrangian mean curvature type equations is the presence of several interesting special cases of this equation, which we discuss below; see \cite{Y20} by Yuan for a recent survey.  From a PDE point of view, the existence of smooth convex solutions to the second boundary problem was shown for several such equations by Brendle-Warren \cite{BrW} and Wang-Huang-Bao \cite{WHB}, in the case of uniformly convex smooth domains.  By taking limits, one recovers viscosity solutions of convex domains satisfying the second boundary problem in a weak sense.

\smallskip
\textbf{Lagrangian mean curvature equation.} Suppose $\psi=\psi(x,y)$ is defined on the ambient space $(x,y)\in \mbb C^n$, and $u$ solves
\eqal{
\label{LMCp}
\sum_{i=1}^n\arctan\lambda_i=\psi(x,Du).
}
By Harvey-Lawson \cite[Proposition 2.17]{HL}, the Lagrangian angle determines the mean curvature vector of $(x,Du(x))$: 
\eqal{
\label{H}
H=J\nabla_g\theta,
}
where $g=I+(D^2u)^T(D^2u)$ is the induced metric on $(x,Du(x))$, and $J$ is the almost complex structure on $\mbb C^n$. Since $\theta(x)=\psi(x,Du(x))$, it follows that $J\nabla_g\psi(x,Du(x))=(J\bar\nabla\psi(x,y))^\bot$, where $\bot$ is the normal projection, and 
$\bar\nabla=(\pd_x,\pd_y)$ is the ambient gradient. Thus \eqref{LMCp} is the potential equation for prescribed Lagrangian mean curvature:
\eqal{
\label{LMC}
H=\Delta_g(x,Du(x))\stackrel{\eqref{LMCp}}{=}(J\bar\nabla\psi(x,y))^\bot|_{y=Du(x)},
}
The constant case
\eqal{
\label{s1}
\sum_{i=1}^n\arctan\lambda_i=c
}
is the \textit{special Lagrangian equation} of Harvey-Lawson \cite{HL}. In this case, $H=0$, and $(x,Du(x))$ is a volume-minimizing Lagrangian submanifold.  

\smallskip
For certain $\psi=\psi(x)$, smooth convex solutions were constructed by Wang-Huang-Bao \cite{WHB} satisfying $Du(\Omega_1)=\Omega_2$ for prescribed uniformly convex smooth domains $\Omega_i$, following Brendle-Warren \cite{BrW} for the constant $\psi$ case; see also Huang \cite{H15} for a construction using Lagrangian mean curvature flow.


\smallskip
\textbf{Lagrangian mean curvature flow.} A family of Lagrangian submanifolds $X(x,t):\re^n\times\re\to\mbb C^n$ evolves by Lagrangian mean curvature flow if it solves
\eqal{
\label{LMCF}
(X_t)^\bot=\Delta_gX.
}
After a change of coordinates, we can locally write $X(x,t)=(x,Du(x,t))$, such that $\Delta_gX=(J\bar\nabla\theta(x,t))^\bot$ by \eqref{H}.  This means a local potential $u(x,t)$ evolves by the parabolic equation
\eqal{
\label{ut}
u_t&=\sum_{i=1}^n\arctan\lambda_i,\\
&u(x,0):=u(x).
}
Several special cases of \eqref{slag} correspond to symmetry reductions of \eqref{LMCF}, which reduce \eqref{ut} to an elliptic equation for $u(x)$, as demonstrated, for example, by Chau-Chen-He \cite{CCH}.  Such solutions model singularities of the mean curvature flow.

\smallskip
\begin{enumerate}
\item[1.] \textit{Self-similar solutions.} 
If $u(x)$ solves
\eqal{
\label{self}
\sum_{i=1}^n\arctan\lambda_i=c+b(x\cdot Du-2u)
}
then $X(x,t)=\sqrt{1-2bt}\,(x,Du(x))$ is a shrinker or expander solution of \eqref{LMCF}, if $b>0$ or $b<0$, respectively.  The initial submanifold $(x,Du(x))$ has mean curvature $H=-bX^\bot$.  Entire smooth solutions to \eqref{self} for $b>0$ are quadratic polynomials, by Chau-Chen-Yuan \cite{CCY}; see also Huang-Wang \cite{HW} for the smooth convex case.  The circle $x^2+u'(x)^2=1$ is a closed example of a shrinker $b=1,c=0$ in dimension $n=1$.  See e.g. Joyce-Lee-Tsui \cite{JLT} for other non-graphical examples.  

\smallskip
\item[2.] \textit{Translating solitons.} 
If $u(x)$ solves
\eqal{
\label{tran}
\sum_{i=1}^n\arctan\lambda_i=c+k\cdot x+\ell\cdot Du(x),
}
then $X(x,t)=(x,Du(x))+t(-\ell,k)$ is a \textit{translator} solution of \eqref{LMCF}, with ``constant" mean curvature $H=(-\ell,k)^\bot$.  The ``grim reaper" curve $(x,u'(x))=(x,-\ln\cos(x))$, for $n=1$ and $k=1,\ell=c=0$, is the model example.  When $\ell=0$, \eqref{tran} corresponds to the case of prescribed constant mean curvature vector of the Lagrangian graphs, see \cite{WHB}. Entire solutions to \eqref{tran} with Hessian bounds are quadratic polynomials, by Chau-Chen-He \cite{CCH}; see also Ngyuen-Yuan \cite{NY} for entire ancient solutions to \eqref{ut} with Hessian conditions.

\medskip
\item[3.] \textit{Rotating solitons.} If $A\in SU(n)$ is skew-adjoint, then the Hamiltonian vector field $A\cdot z=J\bar \nabla\psi$ has a real potential $\psi(x,y)=\frac{1}{2i}\langle z,A\cdot z\rangle_{\mbb C^n}$.  Since $\exp(tA)\in U(n)$ preserves the symplectic form 
$dz\wedge d\bar z=\sum dz^i\wedge d\bar z^i$, the Hamiltonian flow $X(x,t)=\exp(tA)(x,Du(x))$ is a Lagrangian immersion with $X_t=AX=J\bar \nabla\psi$.  Provided $\psi(x,Du(x))=\theta(x)$, i.e. $u(x)$ solves \eqref{LMCp}, it follows from this and the isometric property of $\exp(tA)$ that $X(x,t)$ evolves by mean curvature flow \eqref{LMCF}.  For the example $A=aJ$ and $\psi(x,y)=c+\frac{a}{2}|z|^2$, if $u(x)$ solves
\eqal{
\label{rotator}
\sum_{i=1}^n\arctan\lambda_i=c+\frac{a}{2}(|x|^2+|Du|^2),
}
then $X(x,t)=\exp(atJ)(x,Du(x))$ is a \textit{rotator} solution of \eqref{LMCF}, with $H=a(JX)^\bot$.  The ``Yin-Yang" curve of Altschuler \cite{Alt} is the solution in dimension $n=1$; see also the notes of Yuan \cite[pg. 3]{YNotes}.  The classical radially symmetric solutions of \eqref{rotator} are studied in \cite{LB} for $a=0$.  Other examples of such equations include $\psi(x,y)=-\langle x,B\cdot y\rangle$ for $B\in SO(n)$ skew-symmetric and $\psi(x,y)=Re(\bar z_1z_2)=x_1x_2+y_1y_2$ in $\mbb C^2$, with $A=diag(B,B)$ and $A\cdot(x,y)=(-y_2,-y_1,x_2,x_1)$, respectively.  
\end{enumerate}

\medskip
Optimal regularity was previously obtained for the ``dual" form of \eqref{s1} under the strict convexity condition. The Monge-Amp\`ere equation 
\eqal{
\label{MA}
\ln\det D^2u=\sum_{i=1}^n\ln\la_i=c,
}
is the potential equation for special Lagrangian submanifolds in $(\mathbb {C}^n, dx\,dy)$, as introduced by Hitchin \cite{Hi}; see also Mealy \cite{Me} for an equivalent form.  
Formally, this metric can be brought to the special Lagrangian one $dx^2+dy^2$ after a suitable complex rotation.  
In dimension $n=2$, Alexandrov \cite{AL} and Heinz \cite{H} established strict convexity of solutions to Monge-Amp\`ere type equations, then regularity for generalized solutions.  However, in higher dimensions, the $C^{1,1-2/n}$ and $W^{2,p}$ for $p<\frac{n(n-1)}{2}$ singular solution of Pogorelov \cite{P2} shows that the strict convexity condition is 
optimal for regularity, since its graph contains a line.  Pogorelov's interior estimate \cite{P2} indicated that strict convexity is sufficient for regularity, and Urbas \cite{U} showed that solutions in $C^{1,\al}$ or $W^{2,p}$ for $\al>1-\frac{2}{n}$ or $p>\frac{n(n-1)}{2}$ are strictly convex, then interior regular.  The $W^{2,n(n-1)/2}$ case is due to Collins-Mooney \cite{CM}.  By Caffarelli \cite{Caf}, regularity holds even for positive H\"older right-hand side $\det D^2u\in C^\al$, provided $u=0$ on the (convex) boundary.

Before presenting the main results, we first introduce the following notation and definition. \\

 \noindent
 \textbf{Notations.} \begin{itemize}
\item[1.] 
For $a>0$, we denote by $C^{a}$ the H\"older space $C^{k,\alpha}$, where $k=\floor{a}$ is the integer part, and $\alpha=a-k$ is the fractional part.  If $\alpha=0$, we mean the usual $C^k$ space.
\item[2.] By $C^{a-}$, we mean the intersection of the H\"older spaces $C^{a'}$ for all $a'<a$.
\item[3.] The ball $B_1$ refers to the unit ball centered at the origin. Throughout this paper, we assume all balls to be centered at the origin unless specified otherwise. 
\end{itemize}

\begin{definition}[Partial convexity]
    We say that $\psi=\psi(x,s,p)\in C^2(B_1\times\re\times\re^n)$ satisfies partial convexity if 
\eqal{
\label{convex}
p\mapsto \psi(x,s,p)\text{ is convex, for each fixed }(x,s).
}
\end{definition}

Our main results in this paper are a classification of regularity for convex solutions of mean curvature potential equations, for both smooth phases and H\"older phases.

\begin{theorem}
\label{thm1}
Let $u$ be a convex viscosity solution of \eqref{slag} on $B_1\subset\mathbb R^n$.  If $\psi\in C^2(B_1\times\re\times\re^n)$ satisfies the partial convexity condition  \eqref{convex}
then $u\in C^{4-}(B_1)$.
\end{theorem}

\begin{theorem}
\label{thm2}
Let $u$ be a convex viscosity solution of \eqref{slag} on $B_1\subset\mathbb R^n$.  If $\psi\in C^{\alpha}(B_1\times\re\times\re^n)$ for some 
$\alpha\in(0,2]$, and $u\in C^{1,\beta}(B_1)$ for some $\beta\in (\frac{1}{1+\alpha},1)$, then $u\in C^{2+\alpha-}(B_1)$.
\end{theorem}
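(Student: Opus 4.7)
The plan is to establish $u\in C^{2+\alpha}$ via an approximation argument combined with a Schauder bootstrap. The key observation is that convexity of $u$ places $D^2u$ in the positive semidefinite cone, on which the operator $F(M):=\sum_{i=1}^n\arctan\mu_i(M)$ is concave; this opens the door to Caffarelli's interior $C^{2,\gamma}$ estimate for concave uniformly elliptic fully nonlinear equations with H\"older right-hand side.

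I would first mollify the phase to obtain $\psi_k\to\psi$ in $C^{\alpha'}$ for all $\alpha'<\alpha$, with uniformly bounded $C^\alpha$ norms, and solve the Dirichlet problems $\sum\arctan\lambda_i(u_k)=\psi_k(x,u_k,Du_k)$ on $B_r$ with boundary data $u|_{\partial B_r}$, producing smooth convex solutions $u_k$ with uniform $C^{1,\beta}$ bounds inherited from $u$. The goal then becomes an a priori $C^{2,\alpha}$ estimate for $u_k$ depending only on $\|\psi\|_{C^\alpha}$, $\|u\|_{C^{1,\beta}}$, and the interior distance. Writing the equation as $F(D^2u_k)=f_k(x)$ with $f_k(x):=\psi_k(x,u_k(x),Du_k(x))$, H\"older composition gives $f_k\in C^{\alpha\beta}$ in $x$ (the exponent $\alpha\beta$ coming from the slowest composition, through the $p$-slot). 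A first application of Caffarelli's Schauder estimate yields $u_k\in C^{2,\alpha\beta}$, which in particular makes $Du_k$ Lipschitz. A second application, now with $f_k\in C^\alpha$ since Lipschitz composed with $C^\alpha$ is $C^\alpha$, upgrades to $u_k\in C^{2,\alpha}$. For $\alpha\in(1,2)$, differentiating the equation produces a linear elliptic equation for $\partial_ju_k$ with H\"older coefficients and right-hand side, and standard linear Schauder then yields $u_k\in C^{3,\alpha-1}=C^{2+\alpha}$.

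The principal obstacle is that $F$ is not uniformly elliptic: $DF(M)=(I+M^2)^{-1}$ degenerates as $\|M\|\to\infty$, while the $C^{1,\beta}$ assumption alone only yields a scale-dependent pointwise Hessian bound $\|D^2u_k\|\lesssim r^{\beta-1}$ at scale $r$. A Campanato-type iteration of Caffarelli's estimate must therefore absorb an ellipticity loss of order $r^{2(1-\beta)}$ against a H\"older gain $r^{\alpha\beta}$ from the right-hand side. The hypothesis $\beta>1/(1+\alpha)$, equivalently $\alpha\beta>1-\beta$, is the precise balance under which the iteration closes and yields the uniform $C^{2,\alpha\beta}$ bound that feeds the bootstrap; sharpness of this exponent is witnessed by the singular solutions advertised in the abstract. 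After the bootstrap, passing $k\to\infty$ delivers $u\in C^{2+\alpha}$ on the interior.
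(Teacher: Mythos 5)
Your proposal sketches an a~priori estimate strategy rather than the rotation-and-duality argument the paper actually uses, and it contains two gaps that I believe are fatal as stated.

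First, the approximation step is not available.  You mollify the phase and solve Dirichlet problems with boundary data $u|_{\partial B_r}$, ``producing smooth convex solutions $u_k$.''  But nothing forces the $u_k$ to be convex: the Lagrangian angle operator $\sum\arctan\lambda_i$ is concave only on the positive-semidefinite cone, and a Dirichlet solution with convex boundary data need not stay in that cone.  The paper emphasizes precisely this obstruction in the introduction --- ``the convexity condition $D^2u\ge 0$ is unstable under smooth approximations of boundary data, which prevents using a priori estimates to establish regularity results'' --- and this is why the argument is forced into the viscosity-solution framework with the Lewy--Yuan rotation.  Without convexity of $u_k$, the operator is neither concave nor (without a supercriticality hypothesis) amenable to Caffarelli--Nirenberg--Spruck or Caffarelli's $C^{2,\gamma}$ theory; the whole bootstrap loses its footing at step one.

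Second, even granting convex approximations, the claimed exponent balance does not close.  At scale $r$ the assumption $u\in C^{1,\beta}$ gives $\|D^2u\|\lesssim r^{\beta-1}$, so the minimal eigenvalue of $DF(D^2u)=(I+(D^2u)^2)^{-1}$ degenerates like $r^{2(1-\beta)}$ --- as you note.  A Campanato iteration in which the right-hand side supplies a gain of order $r^{\alpha\beta}$ while the ellipticity costs a factor $r^{-2(1-\beta)}$ requires $\alpha\beta>2(1-\beta)$, i.e.\ $\beta>2/(\alpha+2)$, which is strictly \emph{stronger} than the theorem's hypothesis $\beta>1/(1+\alpha)$ for every $\alpha>0$.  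Your assertion that $\alpha\beta>1-\beta$ is the critical balance is not substantiated, and by your own accounting it would not be.  The paper avoids this entirely: the Lewy--Yuan rotation produces a potential $\bar u$ with $-I\le D^2\bar u\le I$, so its equation is uniformly elliptic with $\psi$-independent constants, and $\bar u\in C^{2+\alpha}$ follows from the VMO-perturbation theory of Section~2.  The exponent threshold $\beta>1/(1+\alpha)$ then enters through Legendre duality in Theorem~\ref{thm:convex}: $U\le C|x|^{1+\beta}$ forces $U^*\ge c|\bar x|^{1+1/\beta}$, while $U^*\in C^{2,\alpha}$ with a degenerate $D^2U^*(0)$ would force $U^*\le C|\bar x|^{2+\alpha}$; these are incompatible exactly when $2+\alpha>1+1/\beta$.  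That elementary comparison, not a scaled Schauder iteration, is where the sharp exponent comes from.
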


\begin{remark}
In particular, Theorem \ref{thm1} shows convex solutions of mean curvature flow potential equations \eqref{self}, \eqref{tran}, and \eqref{rotator} for $a\ge 0$ are smooth.  Theorem \ref{thm2} yields regularity for the $a<0$ case in rotator equation (\ref{rotator}) provided the graph $(x,Du(x))$ is $C^{1/3+}$.
\end{remark}
\begin{remark}
Using singular solutions, we see most of the conditions are optimal:
\begin{enumerate}
\item[1.2.1.]  Partial convexity \eqref{convex} is 
optimal, 
and the exponent range $\beta>1/(1+\al)$ or $\al>\beta^{-1}-1$ in Theorem \ref{thm2} is sharp.  Observe that for $\beta>0$,  $u(x)=|x|^{1+\beta}/(1+\beta)$ is a viscosity solution of a mean curvature equation \eqref{LMCp} with non-convex phase
\eqal{
\sum_{i=1}^n\arctan\lambda_i=\frac{n\pi}{2}-(n-1)\arctan(|Du|^{\beta^{-1}-1})-\arctan(\beta^{-1}|Du|^{\beta^{-1}-1}),
}
even when the phase is smooth $\beta=\frac{1}{3},\frac{1}{5},\frac{1}{7},\dots$.  Similarly, in Section \ref{sec:rot}, we prove that for $a<0$ and $c=n\pi/2$, the smooth, non-convex rotator equation \eqref{rotator} has a ``Yin-Yang" curve-type solution which is convex and $C^{1,1/3}$ near the origin.

\medskip

\item[1.2.2.] The convexity of $u$ in Theorem \ref{thm1} is 
sharp, by semi-convex $C^{1,1/3}$ solutions of special Lagrangian equation \eqref{s1} in dimension $n=3$ as shown by Nadirashvili-Vl\u{a}du\c{t} in \cite{NV}; see also Wang-Yuan \cite{WangY} for $C^{1,\beta}$ solutions, where $\beta=\frac{1}{3},\frac{1}{5},\frac{1}{7},\dots$.  In Section \ref{sec:self}, we show a similar solution exists to the shrinker/expander potential equation \eqref{self}.  
In addition, these special Lagrangian singular solutions $v(x_1,x_2,x_3)$ in three dimensions yield solutions of translator equation \eqref{tran} for $n\ge 5$.  Indeed, after a rotation, we can assume $k=k_0e_{n-1}+m_0e_n$ and $\ell=\ell_0 e_n$ for constants $\ell_0\neq 0$ and $k_0,m_0$.  First suppose $\ell\neq 0$.  Then we add a suitable quadratic polynomial $Ax_{n-1}^2/2+Bx_{n-1}x_n+Cx_n^2/2$ to $v(x_1,x_2,x_3)$.  Provided $\ell_0\neq 0$, we can find $A,B,C$, solve \eqref{tran}, and rotate back.  If instead $\ell=0$ and $k\neq 0$, we Legendre transform $(y,Dv)=(Du,x)$ to the $k=0$ and $\ell\neq 0$ case, repeat the above construction using the Legendre transform of $v$ in place of $v$, then Legendre transform back the result.  In dimension four, these singular solution constructions instead require $k$ and $\ell$ to be linearly dependent vectors in $\re^4$.
\end{enumerate}
\end{remark}

Each condition is also relevant to mean curvature flow.  A Lagrangian submanifold $M\subset \mbb C^n$ has compatible normal and almost complex structures, $TM\bot J(TM)$, and it was shown by Smoczyk \cite{Sm} that this condition is preserved by mean curvature flow. Smoczyk-Wang \cite{SmW} showed that the convexity condition of the initial potential $D^2u(x)\ge 0$ is preserved by Lagrangian mean curvature flow in the compact periodic setting.  In the general entire convex case, the preservation of convexity was reached in Chau-Chen-Yuan \cite{CCY13}. The convexity condition also allows solving the second boundary value problem in several cases \cite{BrW,WHB}; here, a convex solution of \eqref{LMC} is required to satisfy $Du(\Omega_1)=\Omega_2$ for prescribed uniformly convex, smooth domains $\Omega_i$; see also the construction by Huang \cite{H15} using Lagrangian mean curvature flow. Meanwhile, viscosity solutions are closed under mere uniform limits of the Lagrangian potential, see for example \cite[Proposition 2.9]{CC}.  Self-similar solutions \eqref{self} arise as Type 1 blowup limits (minimal blowup rate) in mean curvature flows, 
while eternal solutions, such as translators \eqref{tran} and rotators \eqref{rotator}, arise from Type 2 blowup limits (larger blowup rate).

\medskip
Closely related to convexity $D^2u\ge 0$ is the supercriticality condition $\theta(x)\ge (n-2)\pi/2$.  If either convexity $D^2u\ge 0$ or supercriticality is imposed, then the level set $\{M:tr\arctan(M)=\theta\}$ is convex, and fails convexity if otherwise, as shown by Yuan in \cite{YY0}. The stronger condition $\theta(x)\ge(n-1)\pi/2$ even implies $D^2u\ge 0$. Hessian estimates for constant critical and supercritical phase \eqref{s1} were established by Warren-Yuan \cite{WY9,WY} and Wang-Yuan \cite{WaY}; see also Li \cite{L}. By the Dirichlet problem solvability due to Caffarelli-Nirenberg-Spruck \cite{CNS} for constant critical or supercritical phases, these estimates yield interior regularity for \eqref{s1}. For $C^{1,1}$ supercritical phase $\psi=\psi(x)$,  Hessian estimates were proved in \cite {AB} for solutions of (\ref{slag}), and the Dirichlet problem was solved in \cite[Theorem 1.1]{AB1}.  The semi-convex $C^{1,1/3}$ solutions of Nadirashvili-Vl\u{a}du\c{t} \cite{NV} and Wang-Yuan \cite{WangY} have subcritical phases, so critical or supercritical phase is also an optimal condition.

\medskip
Unlike supercriticality, the convexity condition $D^2u\ge 0$ is unstable under smooth approximations of boundary data, which prevents using a priori estimates to establish regularity results.  It has been an open problem for how to derive interior regularity for convex viscosity solutions to \eqref{s1}, in light of the regularity for $W^{2,p}$ strong solutions by Bao-Chen \cite{BC} and the a priori estimate in Chen-Warren-Yuan \cite{WYJ}. 
This was only recently established in \cite{CSY} using the following low regularity approach.  If $u(x)$ were $C^2$, then the downward $U(n)$ rotation $\bar z=e^{-i\pi/4}z$ yields another Lagrangian submanifold $(\bar x,D\bar u(\bar x))$ whose potential $\bar u(\bar x)$ satisfies another special Lagrangian equation \eqref{s1} with smaller phase $c-n\pi/4$.  For convex Lipschitz $u(x)$, this is still true, but it requires generalizing the 
 transformation $e^{-i\pi/4}(x+iDu(x))=\bar x+iD\bar u(\bar x)$.  
Using the auxiliary functions $U(x):=su(x)+c|x|^2/2$ and $\bar U(\bar x):=-s\bar u(\bar x)+c|\bar x|^2/2$, where $s=\sin\pi/4,c=\cos\pi/4$, the $C^2$ case shows that $\bar U(\bar x)=U^*(\bar x)$, where $f^*(\bar x)=\sup_x(x\cdot\bar x-f(x))$ is the Legendre transform of strictly convex $f(x)$.  In the Lipschitz case, defining $\bar U(\bar x):=U^*(\bar x)$ supplies the desired rotation operation.  The concavity of the equation \eqref{s1} for convex $u$ then ensures the existence of subsolution approximations, guaranteeing that rotation preserves the solution property.  Because the gradient graph slope $0\le D^2u\le\infty$ drops to the Lipschitz bounds $-I\le D^2\bar u\le I$ under rotation,  methods from geometric measure theory give VMO regularity for $D^2\bar u$.  Applying perturbation theory for the now uniformly elliptic  PDE \eqref{s1} then yields $C^2$ and higher regularity for $\bar u(\bar x)$.  At this point, $(x,Du(x))\subset \mbb C^n$ is regular as a submanifold, but need not be a graph.  A geometric calculation shows that the rotated eigenvalue $\sqrt{1+\bar\la_{max}^2}$ is a subsolution of $\Delta_{\bar g}$, so the strong maximum principle guarantees $\bar\la_{max}<1$ everywhere, then $\la_{max}<\infty$.  In our previous paper \cite{BS1}, this approach was extended to $\psi=\psi(x)\in C^{2,\al}$ at the cost of a more complicated geometric calculation.  In passing, we also mention the Hessian estimates for convex smooth solutions with $C^{1,1}$ phase $\psi=\psi(x)$ in Warren \cite[Theorem 8]{WTh}.

\medskip
Our proof of Theorem \ref{thm1} extends the method of \cite{CSY} and \cite{BS1} to $\psi=\psi(x,u,Du)$, simplifies the final step of the proof by connecting it to the constant rank theorem of Caffarelli-Guan-Ma \cite{CGM}, and along the way, generalizes the constant rank theorem of Bian-Guan \cite{BG}.  Our insight is that the final step of showing $\bar\la_{max}<1$ is equivalent to showing that $\bar U(\bar x)$ is strongly convex, $D^2\bar U>0$.  Once shown, then $D^2U<\infty$ by Legendre duality $DU^*\circ DU(x)=x$.  To establish strong convexity, we observe that by convexity condition \eqref{convex}, the original equation $F(D^2U,DU,U,x)=0$ \eqref{slag} for $U(x)$ is concave if $u(x)$ is convex, so that for $U^*(\bar x)$ is inverse convex.  
The dependence of $\psi$ on $u$ corresponds to the dependence of the equation for $\bar U$ on Legendre transform $\bar x\cdot D\bar U-\bar U$.  It turns out the constant rank theorem of Bian-Guan works just as well in this generalized setting, as proved in Section \ref{sec:rank}, even though their result required additional convexity.  Overall, this can be thought of as a dual strategy to that for Monge-Amp\`ere \eqref{MA}, where instead strict convexity of $u(x)$ is key to regularity.

\medskip
The proof of Theorem \ref{thm2} requires a new low regularity approach to showing strong convexity of $U^*(\bar x)$.  In this H\"older $\psi$ setting, we only know $U^*(\bar x)\in C^{2,\al}$, and the constant rank theorem appears inapplicable.  The Monge-Amp\`ere approach to strict convexity in Urbas \cite{U} uses the assumed $C^{1,\al}$ or $W^{2,p}$ regularity and the specific determinant structure of the Monge-Amp\`ere equation to construct barriers, which lift the solution along any cylinder.  Our approach uses the arctangent structure in \eqref{slag} only to deduce that $\bar u(\bar x)\in C^{2,\al}$, as above.  Then more generally, if $f(x)\in C^{1,\beta}$ and $f^*(\bar x)\in C^{2,\al}$, we prove in Theorem \ref{thm:convex} that $f^*(\bar x)$ is strongly convex, hence $f(x)\in C^{1,1}$.  The heuristic idea uses the smoothness reversal of the Legendre transform, interpreted as gradient graph reflection $(x,D f(x))=(D f^*(\bar x),\bar x)$.  If $(x,Df(x))$ is flat near $x=0$, then $(\bar x,Df^*(\bar x))$ is steep near $\bar x=0$.  
Quantitatively, if a singular point of $f(x)$ is 
\textit{no steeper} than $\frac{1}{p}|x|^{p}$ for $p=1+\beta$, then $f^*(\bar x)$ is \textit{no flatter} than Legendre transform $\frac{1}{q}|\bar x|^q$, where $\frac{1}{p}+\frac{1}{q}=1$ and $q=1+\beta^{-1}$.  
This contradicts $f^*(\bar x)\in C^{2,\al}$, 
which implies that $f^*(\bar x)$ is \textit{at least as flat} as $|\bar x|^{2+\alpha}$ at a point failing strong convexity, if $2+\alpha>1+\beta^{-1}$.  

\medskip
The structure of the paper in proving Theorems \ref{thm1} and \ref{thm2} is as follows.  In Section \ref{sec:reg_rot}, we establish regularity for $\bar u(\bar x)$ in both situations of Theorems \ref{thm1} and \ref{thm2}, adapting the above approach of \cite{CSY} and \cite{BS1}.  Modifications are needed in the $\psi=\psi(x,u,Du)$ case in showing subsolution preservation (Proposition \ref{prop:sub}), and in the regularity theory for uniformly elliptic PDEs $F(D^2u)=f(x)$ with H\"older $f(x)$ (Theorem \ref{thm:regF}).  In Section \ref{sec:rank}, we prove Theorem \ref{thm:rank}, which generalizes the constant rank theorem of Bian-Guan \cite{BG}.  In Section \ref{sec:convex}, we prove Theorem \ref{thm:convex}, which establishes ``dual" strong convexity in lower regularity situations.  We conclude the proofs of Theorems \ref{thm1} and \ref{thm2} in Section \ref{sec:proofs}.  In Sections \ref{sec:self} and \ref{sec:rot}, we construct singular solutions for self-similar Lagrangian mean curvature flows \eqref{self} and Lagrangian rotator equation \eqref{rotator}, respectively.

\bigskip
\textbf{Acknowledgments.} The authors are grateful to Yu Yuan for helpful discussions. RS was partially supported by the NSF Graduate Research Fellowship Program under grant No. DGE-1762114.  We also thank the anonymous referees for many helpful suggestions.

\section{Regularity of the rotated Lagrangian potential}
\label{sec:reg_rot}

In this section, we first outline the proof of some preliminary results, adapting methods from \cite{CSY,BS1}, followed by regularity theory that we develop for $C^{1,1}$ viscosity solutions with VMO\footnote{[Vanishing mean oscillation]
Let $\Omega\subset\mathbb{R}^n$. A locally integrable function $v$ is in $VMO(\Omega)$ with modulus $\omega(r,\Omega)$ if
\[\omega(r,\Omega)=\sup_{x_0\in \Omega,0<r\leq R}\frac{1}{|{B_r(x_0)\cap \Omega}|} \int_{B_r(x_0)\cap \Omega} |v(x)-v_{x_0,r}|\rightarrow 0, \text{ as $r\rightarrow 0$}
\]
where $v_{x_0,r}$ is the average of $v$ over $B_r(x_0)\cap \Omega.$
} Hessian, modifying the proof in \cite{Y}. See Section \ref{sec:proof_reg_rot} below for definitions of rotated potential $\bar u(\bar x)$ and rotated domain $\pd\tilde u(B_1)$.
\subsection{$C^{2+\al}$ regularity on the rotated domain.}
\label{sec:proof_reg_rot}

We employ the notation described after Theorems \ref{thm1} and \ref{thm2} to state the following theorem.
\begin{theorem}
\label{thm:reg_rot}
Let $u(x)$ be a convex viscosity solution to \eqref{slag} on $B_1$ for $\psi\in C^\al(B_1\times\re\times\re^n)$ for some 
$\al\in(0,2]$.  If either $u\in C^1$ or $\psi$ satisfies partial convexity \eqref{convex}, then $\bar u(\bar x)\in C^{2+\al-}$ on $\pd\tilde u(B_1)$.
\end{theorem}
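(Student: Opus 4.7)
The plan is to adapt the downward $\pi/4$ $U(n)$-rotation strategy of \cite{CSY, BS1} to the setting where $\psi = \psi(x, u, Du)$. Set $s = \sin(\pi/4)$, $c = \cos(\pi/4)$, and let $\ti u(x) := s u(x) + \tfrac{c}{2}|x|^2$. Convexity of $u$ gives $D^2 \ti u \geq c I > 0$ in the distributional sense, so the Legendre transform $\bar U := \ti u^*$ is finite, convex, and Lipschitz on $\pd \ti u(B_1)$. Define the rotated potential
\[\bar u(\bar x) := s^{-1}\bigl(\tfrac{c}{2}|\bar x|^2 - \bar U(\bar x)\bigr).\]
A formal (smooth) computation using the canonical transformation $e^{-i\pi/4}(x + i Du(x)) = \bar x + i D\bar u(\bar x)$ shows that $\bar u$ satisfies a rotated Lagrangian mean curvature type equation $\sum_i \arctan \bar \la_i = \bar\psi(\bar x, \bar u, D\bar u)$, where $\bar \psi$ is $C^\al$ and inherits the partial convexity properties of $\psi$ after affine change of variables. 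Because the $\pi/4$ rotation maps the tangent slope range $[0, +\infty]$ of $(x, Du(x))$ to $[-1, 1]$, we obtain $\|D^2 \bar u\|_{L^\infty} \leq 1$, so the rotated PDE is uniformly elliptic.

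The next step is to verify that $\bar u$, a priori only Lipschitz with bounded distributional Hessian, is actually a viscosity solution of the rotated equation. This is the content of Proposition~\ref{prop:sub}, which requires new arguments beyond \cite{BS1} because of the $(u, Du)$-dependence of $\psi$. In the partial convexity case, convexity of $\psi$ in $p$ ensures that standard mollifications (or inf-convolutions) of $u$ are smooth convex subsolutions of \eqref{slag}; these rotate cleanly to smooth subsolutions of the rotated equation, and passing to the limit delivers the viscosity subsolution property. The supersolution property follows from sup-convolutions together with the concavity of the arctangent operator on the convex Hessian cone. In the $u \in C^1$ case, continuity of $Du$ makes $x \mapsto c x + s Du(x)$ a homeomorphism onto $\pd \ti u(B_1)$, so the canonical transformation can be read off pointwise and the viscosity formulation transfers without appeal to partial convexity.

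With $\bar u$ a viscosity solution of a uniformly elliptic PDE satisfying $\|D^2 \bar u\|_\infty \leq 1$, the argument of \cite{CSY} yields $D^2 \bar u \in \mathrm{VMO}$: the rotated Lagrangian graph has bounded second fundamental form, and rigidity of affine blow-up limits, coming from the minimal-surface-type structure of the arctangent equation at uniformly bounded Hessian, produces the small-oscillation control. Given $D^2 \bar u \in \mathrm{VMO}$ and $\bar \psi \in C^\al$, we then invoke Theorem~\ref{thm:regF}, a H\"older perturbation theorem for equations $F(D^2 \bar u, D\bar u, \bar u, \bar x) = 0$ with VMO initial Hessian and H\"older coefficient dependence (a modification of \cite{Y}), to conclude $\bar u \in C^{2+\al}$ on $\pd \ti u(B_1)$.

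The main obstacle is Proposition~\ref{prop:sub}: ensuring Legendre duality-based rotation preserves viscosity subsolutions when $\psi$ depends on $(x, u, Du)$. The canonical transformation converts $u$-dependence into dependence on the Legendre-type expression $\bar x \cdot D\bar U - \bar U$, and $Du$-dependence into an affine function of $\bar x$ and $D\bar u$; verifying that partial convexity \eqref{convex} (or $C^1$ regularity) survives these transformations well enough to handle smooth approximations is the key technical novelty over \cite{BS1}, where the analysis was restricted to $\psi = \psi(x)$. Extending the H\"older perturbation regularity of \cite{Y} to admit $(\bar u, D\bar u)$-dependence is also nontrivial but otherwise standard.
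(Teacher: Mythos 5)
Your high-level outline — Lewy--Yuan rotation via Legendre transform, verification of the viscosity solution property for $\bar u$ (Proposition~\ref{prop:sub}), then VMO Hessian from blow-up rigidity and a perturbation theorem — matches the paper's strategy. However, there are three concrete gaps in the details. First, the inf-/sup-convolution remarks are reversed and not how the paper actually proceeds: sup-convolutions preserve \emph{sub}solutions and inf-convolutions preserve \emph{super}solutions, neither is smooth, and the paper instead shows that solid mollifications $u_\e(x)=\int u(x+\e y)\eta(y)\,dy$ are \emph{approximate} subsolutions (with an $o_\e(1)$ error in the right-hand side), via a non-trivial ABP/convex-envelope argument applied to $w:=Q-v+\tfrac{\delta}{2}|x|^2$ which exploits concavity of $\theta$ on $D^2u\ge 0$ together with partial convexity of $\psi$ in $p$. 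That argument is the technical core of Proposition~\ref{prop:sub}, and asserting ``mollifications are subsolutions'' without it leaves the hardest step unproved. Second, you describe Theorem~\ref{thm:regF} as a perturbation theorem for equations $F(D^2\bar u,D\bar u,\bar u,\bar x)=0$ and propose extending it to admit $(\bar u,D\bar u)$-dependence. The paper's Theorem~\ref{thm:regF} is for the pure Hessian form $F(D^2 u)=f(x)$; the rotated equation is reduced to that form by freezing the lower-order arguments into a single right-hand side $g(\bar x)=\psi\bigl(x(\bar x),u(x(\bar x)),s\bar x+cD\bar u(\bar x)\bigr)-n\pi/4$, which is $C^{\min(\al,1)}$ because $D\bar u$ is Lipschitz. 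This is cleaner than extending the perturbation theorem and avoids any unproved claim that the rotated $\bar\psi$ inherits partial convexity (which is neither needed nor obviously true after rotation).

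Third, the case $\al\in(1,2)$ needs a separate argument that you omit. Applying the perturbation theorem with $g\in C^{0,1-}$ yields only $\bar u\in C^{2,\beta}$ for all $\beta<1$; to upgrade to $C^{2+\al}=C^{3,\al-1}$ the paper then bootstraps: once $D\bar u\in C^{1,1-}$, the right-hand side $g$ upgrades to $C^\al$, and a linear difference-quotient/Schauder argument on the now $C^{0,1-}$-coefficient linearized equation gives $D\bar u\in C^{2,\al-1}$. Without this step the conclusion $\bar u\in C^{2+\al}$ does not follow for $\al>1$.
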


\begin{proof}
The proof is explained in the following two subsections. 
\subsubsection{\textbf{Lewy-Yuan rotation $\bar u(\bar x)$ solves a better equation}}
We first recall the Lewy-Yuan rotation by the angle of $\pi/4$, using the formulation developed in \cite{CSY} for Lipschitz potentials.  If $u(x)$ is convex but not $C^1$, then although its gradient graph $(x,Du(x))$ no longer makes sense, it still has a subdifferential $(x,\pd u(x))$, the slopes $y\in\pd u(x)$ of planes touching $u$ from below at $x$.  This need not be a graph, but it is isometric to a graph after a downward rotation of $z=(x,y)$ by $\pi/4$; 
see \cite[Proposition 1.1]{AA}.  
Namely, there exists a  $C^{1,1}$ potential $\bar u(\bar x)$ such that
\eqal{
\label{rotx}
\bar x&=cx+sy,\\
D\bar u(\bar x)&=-sx+cy,
}
whenever $y\in\pd u(x)$, where $c=\cos\pi/4,s=\sin\pi/4$.  Whenever $D^2u(x)$ exists, it transforms according to 
\begin{align}
\label{rule}
D^2\bar u(\bar x)=\left(\frac{-1+t}{1+t}\right)\circ D^2u(x).
\end{align}
Equivalently, the angles of the Hessian decrease by $\pi/4$:
\begin{align}
\label{angle}
\arctan\lambda_i(D^2\bar u(\bar x))=\arctan\lambda_i(D^2u(x))-\pi/4.
\end{align}
Accordingly, the new potential satisfies the Hessian bounds
\begin{align}
\label{bounds}
-I\le D^2\bar u(\bar x)\le I.
\end{align}
The actual potential is constructed using the Legendre transform:
\begin{align*}
f^*(\bar x)=\sup_{x}\left[x\cdot\bar x-f(x)\right].
\end{align*}
Provided $f$ is $C^1$ and strictly convex, the maximizer $x$ satisfies $\bar x=Df(x)$, or $\bar x\in \pd f(x)$ more generally.  
Letting $\tilde u(x)=su(x)+\frac{c}{2}|x|^2$, the new potential is given by 
(see \cite[Proposition 2.1]{CSY})
\begin{align}
\label{rot}
\bar u(\bar x)=\frac{c}{2s}|\bar x|^2-\frac{1}{s}(\tilde u)^*(\bar x).
\end{align}
The maximizer $x$ in the Legendre transform satisfies $\bar x\in\pd\tilde u(x)$.  Because $\tilde u(x)$ is strictly convex, the domain $\bar\Omega=\pd\tilde u(\Omega)$ of the rotated potential $\bar u(\bar x)$ is open and connected (\cite[Lemma 3.1]{CSY}).  

\medskip
Now by \eqref{rotx} and \eqref{angle}, it follows that $\bar u(\bar x)$ satisfies another mean curvature equation for almost every $x$ in $\Omega$:
\begin{align}
\label{snew}
\sum_{i=1}^n\arctan\lambda_i(D^2\bar u(\bar x))=\bar\psi(\bar x,\bar u(\bar x),D\bar u(\bar x)),
\end{align}
where the transformed phase can be written, in a non-essential way, in terms of the original one explicitly by
\begin{align*}
&\bar\psi(\bar x,\bar u,\bar p)=\psi(x(\bar x,\bar p),u(\bar x,x(\bar x,\bar p),\bar u),Du(\bar x,\bar p))-n\pi/4,\\
&x(\bar x,\bar p)=c\bar x-sD\bar u(\bar x),\\
&D u(\bar x,\bar p)=s\bar x+cD\bar u(\bar x),\\
&u(\bar x,x,\bar u)=\bar u+\frac{x\cdot\bar x}{s}-\frac{c}{2s}|x|^2-\frac{c}{2s}|\bar x|^2;
\end{align*}
see also \cite[p.334-335]{CW} for a similar formula.  However, it does not follow from this that \eqref{snew} holds for almost every $\bar x$ in $\bar\Omega$.  This difficulty was circumvented in \cite{CSY} for $\psi=const$ and in \cite{BS1} for continuous $\psi(x)$, using the additional concavity of $\arctan\lambda$ granted by $\lambda\ge 0$.  We now extend this to the present setting.

\begin{proposition}
\label{prop:sub}
Let $u$ be a convex viscosity solution of \eqref{slag} on $B_1$ for $\psi\in C(B_1\times\re\times\re^n)$ continuous.  If either $u\in C^1$ or $\psi$ satisfies partial convexity \eqref{convex}, then $\bar u(\bar x)$ is a viscosity solution of \eqref{snew} on $\pd \tilde u(B_1)$.
\end{proposition}

\medskip
The proofs follow \cite[Propositions 3.2 and 3.3]{CSY} and are reproduced here to account for the $Du$ dependence in $\psi$.  The outline of the subsolution proof is (i) show that smooth approximations of subsolutions are approximate subsolutions if $\psi$ has the convexity property \eqref{convex}, (ii) rotate the now-smooth approximate subsolutions to get approximate subsolutions of the smaller-angle equation \eqref{snew}, (iii) take a limit and use the closedness of viscosity subsolutions.  The supersolution case is more direct and uses the order-preserving nature of rotations to compare quadratic polynomials to supersolutions in the pre-rotated space.

\begin{proof}
First, if convex $u\in C^1$, then $x\mapsto \psi(x,u(x),Du(x))=:\phi(x)$ is continuous, and $u$ is a viscosity solution of $\theta(D^2u)=\phi(x)$.  Since $\phi(x)$ is trivially partial convex in the $Du$ variable, we need only consider the second case.

\smallskip
We next consider the case where $u$ is convex and $\psi$ satisfies \eqref{convex}.  It suffices to show that $\bar u(\bar x)$ is a viscosity 
solution 
of \eqref{snew} on $\pd \tilde u(B_{0.8})$, say, since the argument can be localized.  

\smallskip
We first show that $\bar u(\bar x)$ is a viscosity subsolution of \eqref{snew} on $\pd\tilde u(B_{0.8})$.  
First, we define continuous $\phi(x,p)=\psi(x,u(x),p)$, and note that $u(x)$ sub-solves $\theta(D^2u)=\phi(x,Du(x))$.  Following the argument in \cite[Theorems 5.5, 5.8]{CC} for the $F(D^2u)=0$ case, to show that averages are also subsolutions, we define a convex combination $v(x)=\sum_{i=1}^k t_iu(x+\e y_i)$ on $B_{0.9}$ for $\e>0$ small enough and some $y_i\in B_1$ and nonnegative $t_i:\sum_1^kt_i=1$.  Then, suppose quadratic $Q(x)$ touches $v(x)$ at the origin $x=0$ from above near $x=0$, say. 
We form an auxiliary function
\eqal{
w(x):=Q(x)-v(x)+\frac{\delta}{2}(|x|^2-r^2)
}
for small fixed $\delta>0$.  Note that $\partial w$ does not depend on $r$.  Since $Q-v\ge 0$, it follows that $w$ achieves 
non-negative values on small spheres $\pd B_r$, and 
$w(0)=-\delta r^2/2$ is strictly negative. 
Moreover, since $v$ is convex, $w_-=\min(w,0)$ is $C^{1,1}$ from above in $B_{r}$.  Hence, letting $\Gamma_w(x)$ be the convex envelope below $w_-$
in $B_{2r}$
, it follows from \cite[Lemma 3.5]{CC} that $\Gamma_w\in C^{1,1}$ in $B_{2r}$ and $\det D^2\Gamma_w(x)=0$ outside the contact set $\{w=\Gamma_w\}\cap B_r$, such that the ABP estimate holds:
\eqal{
0<\int_{\{w=\Gamma_w\}\cap B_r}\det D^2\Gamma_w(x)dx.
}
Choosing $z$ in the integration set such that $D^2u(z+\e y_i)$ exists for each $i$, the convexity of $\Gamma_w$ shows that $D^2Q+\delta I\ge D^2v(z)$.  We thus start with
\eqal{
\theta(D^2Q+\delta I)-\phi(0,DQ(0))\ge \theta(D^2v(z))-\phi(0,DQ(0)).
}
The concavity of arctangent operator $\theta(D^2v)$ for convex $v$ yields
\eqal{
\theta(D^2Q+\delta I)-\phi(0,DQ(0))&\ge \sum_{i=1}^k t_i\theta(D^2u(z+\e y_i))-\phi(0,DQ(0))\\
&=\sum_{i=1}^kt_i\phi(z+\e y_i,Du(z+\e y_i))-\phi(0,DQ(0)).
}
For the second term, we start with
\eqal{
DQ(0)=DQ(z)+O(r)=Dv(z)+Dw(z)+O(r).
}
Since $w(x)$ is semi-concave, it has continuous superdifferential by \cite[Corollary 24.5.1]{R}, which means $Dw(z)-Dw(0)$ vanishes as $r\to 0$. 
Note that a semi-convex function touched from above by a polynomial has a unique gradient.  Since $Dw(0)=0$, it follows $Dw(z)=o_r(1)$ as $r\to 0$.  We apply partial convexity \eqref{convex} to obtain
\eqal{
\theta(D^2Q+\delta I)-\phi(0,DQ(0))&\ge \sum_{i=1}^kt_i\phi(z+\e y_i,Du(z+\e y_i))-\phi(0,Dv(z))-o_r(1)\\
&\ge \sum_{i=1}^kt_i\left(\phi(z+\e y_i,Du(z+\e y_i))-\phi(0,Du(z+\e y_i))\right)-o_r(1)\\
&\ge -o_\e(1)-o_r(1),
}
where, by the local Lipschitz bound for convex $u$, $o_\e(1)$ vanishes uniformly in $B_{0.9}$ as $\e\to 0$.  Sending $r\to 0$ and $\delta\to 0$ shows that average $v$ is a subsolution of the equation
\eqal{
\label{eqapprox}
\theta(D^2v)\ge \phi(x,Dv(x))-o_\e(1),\qquad x\in B_{0.9}(0).
}
More to the point, letting $\eta_\e=\e^{-n}\eta(x/\e)$ be a standard nonnegative mollifier with $\int \eta=1$, $supp\,\eta\subset B_1$, we can uniformly approximate the solid average $u_\e(x):=\int_{B_1}u(x+\e y)\eta(y)dy$ by some such $v(x)$ as above.  It follows from the closedness 
of subsolutions \cite[Remark 6.3]{CIL} under uniformly convergent subsolutions that $u_\e(x)$ is a smooth subsolution of \eqref{eqapprox} on $B_{0.9}(0)$.

\smallskip
Next, it was shown in \cite[Proposition 3.3]{CSY} the subdifferential 
$\pd\tilde u_\e\subset \re^n\times\re^n$
converges to $\pd\tilde u$.  In particular, the rotation
$\bar u_\e=(u_\e)^-$ 
is well defined on the open set $\bar\Omega:=\pd \tilde u(B_{0.8})\subset\pd\tilde u_\e(B_{0.9})$ for small $\e$.  By the gradient and Hessian transformation rules \eqref{rotx} and \eqref{angle}, it follows that $\bar u_\e$ is a subsolution of the approximate rotated equation,
\eqal{
\theta(D^2\bar u_\e)\ge \phi(c\bar x-sD\bar u_\e(\bar x),s\bar x+cD\bar u_\e(\bar x))-n\pi/4-o_\e(1),\qquad \bar x\in\bar\Omega.
}
Note that $o_\e(1)$ is still uniformly small.  We now send $\e\to 0$. As in \cite{CSY}, since the Legendre transform is order reversing, the rotation \eqref{rot} is order preserving and respects uniform convergence, $\bar u_\e\to\bar u$ on $\bar\Omega$.  The closedness \cite[Proposition 2.9]{CC} of viscosity subsolutions under uniformly convergent solutions and locally uniformly convergent PDE operators shows that $\bar u$ is a subsolution of rotated equation \eqref{snew}.

\medskip

We now show $\bar u(\bar x)$ is a supersolution of \eqref{snew} on $\bar\Omega:=\pd\tilde u(B_{0.8})$, reproducing the argument in \cite[Proposition 3.2]{CSY}.  Let $\bar Q$ be a quadratic polynomial touching $\bar u$ from below nearby $\bar x_0\in \partial \tilde u(x_0)$, i.e. $\bar Q(\bar x_0)=\bar u(\bar x_0)$ and $\bar Q\le \bar u$ in an open subset of open set $\bar\Omega$.  After shifting $x$, subtracting a linear function from $u(x)$, and modifying $\psi(x,u,Du)$ accordingly, we assume that $x_0=\bar x_0=0$ and $u(0)=\bar u(0)=0$.  By \eqref{bounds}, we know $D^2\bar Q\le D^2\bar u\le I$.  Because we could subtract $\epsilon|\bar x|^2$ from $\bar Q$, then send $\epsilon\to 0$, we assume $D^2\bar Q<I$.  By \eqref{rule}, this implies that the pre-rotated function $Q(x)$ is also a quadratic polynomial, i.e. $D^2Q<\infty$.  The subtraction also guarantees that $\bar Q<\bar u$ on an open neighborhood of closed set $\pd\tilde u(0)$.  Indeed, we have $(\tilde u)^*(\bar x)\equiv 0$ for all $\bar x\in \pd\tilde u(0)$, so $\bar Q\le \bar u$ even on the boundary of $\pd\tilde u(0)$.  By the order preservation of rotation and the continuity of the subgradient mapping $\pd\tilde u:\re\to\re^n\times\re^n$ (see \cite[Corollary 24.5.1]{R}), we find that pre-rotation $Q$ touches $u$ from below at $x=0$ on an open neighborhood of $0\in B_1(0)$.  Because $u$ is a supersolution, and $Q$ is a quadratic polynomial, we deduce
$$
\sum_{i=1}^n\arctan\lambda_i(D^2Q)\le \psi(0,0,0).
$$
Using the transformation rules \eqref{rotx} and \eqref{angle} for rotation, we conclude
$$
\sum_{i=1}^n\arctan\lambda_i(D^2\bar Q)\le \psi(0,0,0)-n\pi/4,
$$
which is \eqref{snew} after the normalization.  This verifies that $\bar u$ is a supersolution.
\end{proof}

\subsubsection{\textbf{Rotated regularity $\bar u(\bar x)\in C^{2+\alpha-}$}}

Now that $\bar u(\bar x)$ is $C^{1,1}$, the mean curvature equation \eqref{snew} is uniformly elliptic with a continuous right-hand side 
as a function of $\bar x$.


\medskip
We next claim $D^2\bar u$ is VMO, even if $\psi\in C^\alpha$.  The mean curvature is no longer bounded, and there is no monotonicity formula.  Mere continuity is enough for VMO, and a simple fix is as follows.  Repeating the proof of \cite[Proposition 3.1]{BS1} verbatim, we blowup the solution using a quadratic rescaling.  The $W^{2,\epsilon}$ estimate in \cite[Proposition 7.4]{CC}, combined with the Hessian bounds \eqref{bounds}, give local $W^{2,p}$ and $C^{1,\alpha}$ convergence of a subsequence.  The limit is a $C^{1,1}$ viscosity solution of the constant phase, special Lagrangian equation on $\re^n$ with Hessian bounds \eqref{bounds}, so the work of Yuan from the 2000's now applies \cite{Y}; we refer verbatim to \cite[pg. 9, Step 1]{CSY} which shows that the limit is smooth, and to \cite[pg. 122, Step B]{YY} which shows that a smooth entire solution of the constant phase equation with bounds \eqref{bounds} is quadratic.  Using this rigidity of the limits, we conclude from the proof of \cite[Proposition 2.3]{Y} verbatim that the Hessian of the solution is VMO.

\medskip
Now that $D^2\bar u$ is in VMO, we prepare to invoke the regularity theory in Section \ref{sec:reg}.  Let $G(D^2\bar u)$ be a $C^2$ uniformly elliptic operator which extends the truncation of $F(D^2\bar u)$ in \eqref{snew} to $|D^2\bar u|\le 1$.  By the Hessian bounds \eqref{bounds}, it follows that $\bar u$ is also a viscosity solution of
$$
G(D^2\bar u)=g(\bar x),
$$
where $\bar x\mapsto\bar\psi(\bar x,\bar u(\bar x),D\bar u(\bar x))=:g(\bar x)$ 
is a $C^{0,\min(\al,1)}$ right hand side, since $D\bar u(\bar x)$ is Lipschitz.  If $\al\in(0,1)$, we apply Theorem \ref{thm:regF} and deduce that $\bar u\in C^{2,\al}=C^{2+\al}$.  If $\al=1$, then we apply the previous result for any $\al'<1$ and deduce $\bar u\in C^{3-}$.  If $\al\in(1,2)$, then 
$\bar u\in C^{3-}$ to start with.  Now that $D\bar u(\bar x)\in C^{2-}$, we infer that $g\in C^{\al}$, so the linear equation satisfied by the difference quotient $[\bar u(\bar x+h)-\bar u(\bar x)]/|h|$ has a $C^{\al-1}$ right-hand side, and the associated second order linear operator involving $D^2\bar u\in C^{1-}$ has $C^{1-}$ coefficients.  It follows from Schauder estimates that $D\bar u(\bar x)\in C^{2,\al-1}$, hence $\bar u\in C^{3,\al-1}=C^{2+\al}$.  If $\alpha=2$, we apply the previous result for any $\al'<2$ and deduce that $\bar u\in C^{4-}$.  This concludes the proof of Theorem \ref{thm:reg_rot}.

\end{proof}
\subsection{$C^{2,\alpha}$ estimates for VMO solutions of fully non-linear elliptic equations}
\label{sec:reg}

In this section, we develop regularity theory for $C^{1,1}$ viscosity solutions of fully non-linear uniformly elliptic equations of the form
\begin{equation}
    F(D^2u)=f(x) \label{F} \text{  in  }B_1
\end{equation}
where $F\in C^2$, $f\in C^\alpha$ is H\"older continuous with $\alpha\in (0,1)$ (for this section only), and the Hessian of $u$ is in VMO. We assume $|D^2u|_{L^{\infty}(B_1)}\leq K$. Since the ellipticity constants of the uniformly elliptic Lagrangian mean curvature type equations depend on $K$, we
simplify notation and denote by $K$ a large constant controlling the Hessian, the ellipticity constants, and the VMO modulus.

\begin{theorem}
\label{thm:regF}
 Suppose that $u$ is a $ C^{1,1}$ viscosity solution of (\ref{F}) on
$B_1$. If $f\in C^{\alpha}(B_1)$ and $D^2u\in VMO(B_{3/4})$, then $u\in C^{2,\alpha}(B_{1/2})$ and satisfies
the following estimate 
\begin{equation}
||u||_{C^{2,\alpha}(B_{1/2})}\leq C(n, K,\alpha,\left\Vert f\right\Vert _{C^{\alpha}(B_{1})}) .\label{AAA}%
\end{equation}

\end{theorem}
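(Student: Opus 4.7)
The plan is to adapt the standard Caffarelli-type perturbation-iteration scheme so as to exploit the VMO hypothesis on $D^2u$, following the approach of Yuan in \cite{Y} and incorporating the H\"older right-hand side $f$. The idea is that on small balls, the nonlinear equation $F(D^2u)=f$ is close to a linear equation with constant coefficients, because $F\in C^2$ and $D^2u$ is close, on average, to its mean.

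\textbf{Linearization and one-step approximation.} Fix $x_0\in B_{1/2}$ and $r\le r_0$, and let $M_r=\frac{1}{|B_r|}\int_{B_r(x_0)}D^2u$. Since $F\in C^2$ with $|D^2u|\le K$, a pointwise (a.e.) Taylor expansion gives
$$a_0^{ij}\pd_{ij}u(x)=f(x)+a_0^{ij}(M_r)_{ij}-F(M_r)-E(x),$$
where $a_0=F'(M_r)$ is a constant uniformly elliptic matrix with ellipticity depending on $K$, and $|E(x)|\le C(K)|D^2u(x)-M_r|^2\le 2CK|D^2u-M_r|$. Let $v$ solve the Dirichlet problem $a_0^{ij}\pd_{ij}v=f(x_0)+a_0^{ij}(M_r)_{ij}-F(M_r)$ on $B_r(x_0)$ with $v=u$ on $\pd B_r(x_0)$. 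The ABP/$L^n$ estimate for $w=u-v$ yields
$$\|u-v\|_{L^\infty(B_r(x_0))}\le Cr\,\bigl(\|f-f(x_0)\|_{L^n(B_r)}+\|E\|_{L^n(B_r)}\bigr)\le C[f]_{C^\alpha}r^{2+\alpha}+Cr^2\omega(r),$$
where $\omega(r)\to 0$ is controlled by the VMO modulus of $D^2u$ together with its $L^\infty$ bound.

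\textbf{Campanato iteration.} Since $v$ solves a linear constant-coefficient equation, it is smooth with Hessian bounded in terms of $K$, so its quadratic Taylor polynomial $P$ at $x_0$ satisfies $\|v-P\|_{L^\infty(B_{\theta r})}\le C\theta^3 r^2$ for any $\theta\in(0,1/2)$. Combining,
$$\inf_{P}\|u-P\|_{L^\infty(B_{\theta r}(x_0))}\le C\theta^3 r^2+C\theta^{-n}\bigl([f]_{C^\alpha}r^{2+\alpha}+r^2\omega(r)\bigr),$$
the infimum taken over quadratic $P$ with $F(D^2P)=f(x_0)$ and $|D^2P|\le 2K$. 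Choosing $\theta$ so that $C\theta^3<\tfrac12\theta^{2+\alpha}$ and then restricting to radii with $\omega(r)\le r^\alpha$, a standard geometric iteration produces quadratic polynomials $P_k$ on $B_{\theta^k r_0}(x_0)$ with $\|u-P_k\|_{L^\infty}\le C\theta^{k(2+\alpha)}r_0^{2+\alpha}$ and $\|D^2P_{k+1}-D^2P_k\|\le C\theta^{k\alpha}$. The limit $D^2u(x_0):=\lim_k D^2P_k$ is $C^\alpha$ in $x_0$ by comparing the iterations at nearby base points, giving $u\in C^{2,\alpha}(B_{1/2})$ with the quantitative bound \eqref{AAA}.

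\textbf{Main obstacle.} The key difficulty is controlling the quadratic error $E\sim|D^2u-M_r|^2$ with only an averaged smallness estimate from VMO, rather than pointwise smallness; this is why the ABP $L^n$-type perturbation step, and not a pointwise comparison, is essential, and why the VMO hypothesis is needed uniformly in the base point $x_0$ instead of continuity at a single point. The H\"older right-hand side $f$ enters only through the extra $r^{2+\alpha}$ factor in the perturbation estimate, which is exactly what upgrades the conclusion from $C^2$ (as in \cite{Y}) to $C^{2,\alpha}$.
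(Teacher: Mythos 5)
Your iteration step has a genuine gap. After the one-step comparison you arrive at
\begin{equation*}
\inf_{P}\|u-P\|_{L^{\infty}(B_{\theta r}(x_0))}\le C\theta^{3}r^{2}+C\theta^{-n}\bigl([f]_{C^{\alpha}}r^{2+\alpha}+r^{2}\omega(r)\bigr),
\end{equation*}
and then propose to close the Campanato loop by ``restricting to radii with $\omega(r)\le r^{\alpha}$.'' That is not something you get to assume: VMO only provides $\omega(r)\to 0$ with no quantitative rate, and in fact the statement $\omega(r)\lesssim r^{\alpha}$ is the Campanato characterization of $D^2u\in C^{\alpha}$, which is precisely the conclusion you are trying to prove. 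If you drop that restriction, the $r^{2}\omega(r)$ term does not decay like $r^{2+\alpha}$ (or any fixed power), and the geometric iteration does not close. Re-invoking the VMO modulus at every new scale $\theta^{k}r_{0}$ gives no improvement from one scale to the next, so the error never becomes subdominant.

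The fix, which is what the paper does, is to use VMO \emph{once}, to produce a single good quadratic approximation at a fixed small scale, and then to power the subsequent iteration by a different mechanism: after that first step the rescaled solution has a small Hessian deviation, so the rescaled nonlinear operator $F_{i}(N)=\mu^{-(i\alpha+m)}F(\mu^{i\alpha+m}N+c_{i})$ has second derivative $\mu^{i\alpha+m}D^{2}F$, and the quadratic Taylor remainder in the comparison with the linear solution $h$ carries the geometric factor $\mu^{i\alpha+m}$. Concretely, the paper's Proposition~\ref{p} converts the VMO hypothesis into the hypothesis $\|u-P\|_{L^{\infty}(B_{1})}\le\mu^{2+\alpha+m}$ of Lemma~\ref{lemma 2.6} after rescaling, and Lemma~\ref{lemma 2.6} then runs the Campanato iteration without touching $\omega(r)$ again — the term $\mu^{i\alpha+m}\varepsilon^{-4}$ in the estimate on $\|v_{i}-h\|_{L^{\infty}}$ is exactly the quadratic error, and it shrinks because of the rescaling of $F$, not because of any assumed decay of $\omega$. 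Your argument needs this separation (one VMO-driven initial step, then a self-improving iteration) to be correct; as written, it conflates the two and silently assumes the conclusion.

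A secondary, fixable issue: your bound $\|E\|_{L^{n}(B_{r})}\lesssim r\omega(r)$ is also off. With $|E|\le 2CK|D^{2}u-M_{r}|$ and only an $L^{1}$-average bound $\fint_{B_{r}}|D^{2}u-M_{r}|\le\omega(r)$ together with $\|D^{2}u\|_{L^{\infty}}\le K$, interpolation gives $\|E\|_{L^{n}(B_{r})}\lesssim r\,\omega(r)^{1/n}$, not $r\,\omega(r)$; the ABP estimate then gives $r^{2}\omega(r)^{1/n}$. This does not change the nature of the gap but should be recorded correctly if you rewrite the step.
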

The following Lemma is an extension of \cite[Lemma 3.8]{CC1} to a variable right hand side. Note that this Lemma holds good under weaker assumptions on equation (\ref{F}) as compared to the above Theorem.
\begin{lemma}\label{rrr}
Suppose that $u\in C(B_{r})$ is a viscosity solution of $G(D^2u)=g(x)$ in $B_{r}$, for some $r>0$, where $G$ is a uniformly elliptic operator and $g\in C^{\alpha}(B_{r})$. If $Q$ is a quadratic polynomial, then there exists a quadratic polynomial $P$ such that 
\begin{align*}
G(D^2P)=g(0),\\
    ||u-P||_{L^{\infty}(B_{r})}\leq C||u-Q||_{L^{\infty}(B_{r})}+C[g]_{C^{\alpha}(B_{r})}r^{2+\alpha}
\end{align*}
where $C$ is a universal constant. 
\end{lemma}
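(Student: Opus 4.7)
The strategy is to reduce to the constant right hand side case treated by Caffarelli--Cabré \cite[Lemma 3.8]{CC1}, by replacing $g(x)$ with its value $g(0)$ at the origin and estimating the error via the ABP maximum principle. More precisely, I would let $h \in C(\overline{B_r})$ be the viscosity solution of the Dirichlet problem
\begin{equation*}
G(D^2 h) = g(0) \text{ in } B_r, \qquad h = u \text{ on } \partial B_r,
\end{equation*}
whose existence is guaranteed by the standard solvability theory for uniformly elliptic fully non-linear operators. The key point is that $u - h$ then satisfies a uniformly elliptic linear equation (with Pucci extremal operators as bounds) with right hand side $g(x) - g(0)$, which is of size $\|g\|_{C^\alpha(B_r)} r^\alpha$. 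Applying ABP on $B_r$ yields the first comparison estimate
\begin{equation*}
\|u - h\|_{L^\infty(B_r)} \le C r^{2} \|g - g(0)\|_{L^\infty(B_r)} \le C \|g\|_{C^\alpha(B_r)} r^{2 + \alpha}.
\end{equation*}

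Next, since $h$ solves the \emph{constant} right hand side equation $\widetilde G(D^2 h) := G(D^2 h) - g(0) = 0$ in $B_r$, I would apply the CC approximation lemma \cite[Lemma 3.8]{CC1}, scaled from $B_1$ to $B_r$ via $v(y) = r^{-2} h(ry)$, to produce a quadratic polynomial $P$ with $\widetilde G(D^2 P) = 0$, that is $G(D^2 P) = g(0)$, and
\begin{equation*}
\|h - P\|_{L^\infty(B_r)} \le C \|h - Q\|_{L^\infty(B_r)}.
\end{equation*}
The constant $C$ here is universal since the CC lemma is scaling invariant when the right hand side is constant and the operator is uniformly elliptic.

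Combining the two estimates via the triangle inequality gives
\begin{equation*}
\|u - P\|_{L^\infty(B_r)} \le \|u - h\|_{L^\infty(B_r)} + \|h - P\|_{L^\infty(B_r)} \le C \|g\|_{C^\alpha(B_r)} r^{2+\alpha} + C \|h - Q\|_{L^\infty(B_r)},
\end{equation*}
and bounding $\|h - Q\|_\infty \le \|h - u\|_\infty + \|u - Q\|_\infty$ and reabsorbing the first term into the H\"older contribution gives the stated inequality. The only real subtlety is the choice of auxiliary $h$ and the verification that the CC lemma applies to $\widetilde G$; since $\widetilde G$ has the same ellipticity constants as $G$, this is automatic, and I expect no substantive obstacle beyond this bookkeeping.
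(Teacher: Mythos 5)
Your proposal is correct but takes a genuinely different route from the paper's. The paper avoids introducing an auxiliary Dirichlet solution $h$ entirely: it takes $P=Q+\tfrac{s}{2}|x|^2$ for a scalar $s$ chosen by ellipticity so that $G(D^2 P)=g(0)$, and controls $|s|$ by a direct barrier/touching argument. Specifically, since $u-Q\in S(\lambda/n,\Lambda,g-a)$ with $a=G(D^2Q)$, comparing $u-Q$ with the paraboloid $R(x)=2r^{-2}b|x|^2$ (where $b=\|u-Q\|_{L^\infty(B_r)}$) forces an interior contact point $x_0$ at which the Pucci inequalities yield $|g(x_0)-a|\le Cr^{-2}b$; adding $|g(0)-g(x_0)|\le \|g\|_{C^\alpha}r^\alpha$ then bounds $|s|$ and hence $\|P-Q\|_{L^\infty(B_r)}$. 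Your version instead solves the constant-right-hand-side Dirichlet problem for $h$, applies ABP to $u-h$, and then invokes the constant-RHS approximation lemma of Caffarelli--Cabr\'e as a black box for $h$; this is a cleaner modular reduction, and the triangle-inequality bookkeeping you give is correct. The trade-off is that your argument additionally needs viscosity solvability of the Dirichlet problem for $G(D^2h)=g(0)$ on $B_r$ with boundary data $u$ (Perron/Ishii plus comparison, standard but not free), and needs the "twisted subtraction" statement putting $u-h$ in the Pucci class $S(\lambda/n,\Lambda,g-g(0))$ when both $u$ and $h$ are only viscosity solutions; the paper sidesteps both by perturbing $Q$ explicitly and only ever subtracting a smooth function from $u$. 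Your scaling remark for the constant-RHS lemma (invariance of $F(D^2u)=0$ under $u\mapsto r^{-2}u(r\cdot)$) is correct. Net effect: the two proofs buy the same estimate; yours is shorter given the black-box lemma, the paper's is more self-contained and produces an explicit $P$.
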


\begin{proof}
We denote the ellipticity constants of $G$ by $0<\lambda\leq\Lambda$. Let $a=G(D^2Q)$, $b=||u-Q||_{L^{\infty}(B_{r})}$, and $R(x)=2r^{-2}b|x|^2$. From \cite[Proposition 2.13]{CC}, we get \[ u-Q\in S(\lambda/n,\Lambda,G(D^2u)-G(D^2Q))=S(\lambda/n,\Lambda,g(x)-a). 
\] We observe the following
\begin{align*}
    [R-(u-Q)]|_{\partial B_{r}}=2b-(u-Q)|_{\partial B_{r}}\geq 2b-(u-Q)(0)-2||u-Q||_{L^{\infty}(B_{r})}
    =[R-(u-Q)](0).
\end{align*}
Since $u-Q$ is a viscosity subsolution and the minimum of $R-(u-Q)$ is achieved at an interior point of $B_{r}$, say $x_0$, we get $\mathcal{M}^+(D^2R,\lambda/n,\Lambda)\geq g(x_0)-a$. Similarly, using the fact that $u-Q$ is also a viscosity supersolution, we get $|g(x_0)-a|\leq Cr^{-2}b$. This shows
\[ |g(0)-G(D^2Q)|\leq Cr^{-2}||u-Q||_{L^{\infty}(B_{r})}+C[g]_{C^{\alpha}(B_{r})}r^\alpha
\]
By the ellipticity of $G$, there exists $s\in \mathbb R$ such that $G(D^2Q+s I_n)=g(0)$ and \[|s|\leq Cr^{-2}||u-Q||_{L^{\infty}(B_{r})}+C[g]_{C^{\alpha}(B_{r})}r^\alpha.\]Fixing this $s$ we define $P(x)=Q(x)+\frac{1}{2}s|x|^2$. Therefore, we get
\begin{align*}
    ||u-P||_{L^{\infty}(B_{r})}\leq ||u-Q||_{L^{\infty}(B_{r})}+||Q-P||_{L^{\infty}(B_{r})}\\
    =||u-Q||_{L^{\infty}(B_{r})}+\frac{1}{2}|s|r^2\leq C||u-Q||_{L^{\infty}(B_{r})}+C[g]_{C^{\alpha}(B_{r})}r^{2+\alpha}.
\end{align*}
\end{proof}

\begin{proposition}\label{p}
Let $u$ be a $C^{1,1}$ viscosity solution of (\ref{F}) on $B_{1}$, where $D^2u\in VMO(B_{3/4})$, with VMO modulus $\omega(r)$. Then for any $\varepsilon_0>0$ there exists $\eta=\eta(n,\omega, K,||f||_{C^{\alpha}(B_1)},\varepsilon_0)$ and a quadratic polynomial $P$ such that 
\begin{align*}
   \sup_{B_1} |\frac{u(\eta x)}{\eta^2}-P(x)|\leq \varepsilon_0 \\
    F(D^2P)=f(0).
\end{align*}
\end{proposition}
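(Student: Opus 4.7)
The plan is to argue by contradiction via rescaling and compactness. Suppose the conclusion fails for some $\varepsilon_0 > 0$; then there exists a sequence $\eta_k \to 0$ such that no quadratic polynomial $P$ with $F(D^2 P) = f(0)$ satisfies $\sup_{B_1}|u(\eta_k x)/\eta_k^2 - P(x)| \leq \varepsilon_0$. Define the rescalings
\[
\tilde v_k(x) := \frac{u(\eta_k x) - u(0) - \eta_k x \cdot Du(0)}{\eta_k^2},
\]
so that $\tilde v_k(0)=0$, $D\tilde v_k(0)=0$, $|D^2 \tilde v_k| \leq K$ on $B_1$, and $F(D^2 \tilde v_k(x)) = f(\eta_k x)$ in the viscosity sense. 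The uniform $C^{1,1}$ bounds give, by Arzela-Ascoli, a subsequence $\tilde v_k \to v_\infty$ in $C^{1,\beta}(B_1)$ for every $\beta<1$, with $D^2 \tilde v_k \rightharpoonup D^2 v_\infty$ weakly-$*$ in $L^\infty(B_1)$.

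The crux is to show $v_\infty$ is a quadratic polynomial. The VMO hypothesis transfers to $\tilde v_k$ as follows: for any ball $B_r(x_0) \subset B_1$, a change of variables $y = \eta_k x$ gives
\[
\frac{1}{|B_r(x_0)|} \int_{B_r(x_0)} \left| D^2 \tilde v_k - (D^2 \tilde v_k)_{x_0,r} \right| dx \leq \omega(r\eta_k),
\]
where $(\,\cdot\,)_{x_0,r}$ denotes the average over $B_r(x_0)$. For $r$ fixed, this bound tends to $0$ as $k\to\infty$. Combining with the weak-$*$ convergence of the averages of $D^2 \tilde v_k$ to those of $D^2 v_\infty$, a triangle inequality shows that $D^2 \tilde v_k$ converges in $L^1(B_r(x_0))$ to the constant $(D^2 v_\infty)_{x_0,r}$, forcing $D^2 v_\infty = (D^2 v_\infty)_{x_0,r}$ a.e. on $B_r(x_0)$. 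Letting $x_0$ and $r$ vary, $D^2 v_\infty$ is a.e. constant on $B_1$, so $v_\infty$ is a quadratic polynomial $P_\infty$.

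Since $f \in C^\alpha$, $f(\eta_k x) \to f(0)$ uniformly on $B_1$, and stability of viscosity solutions under uniform limits \cite[Proposition 2.9]{CC} yields $F(D^2 v_\infty) = f(0)$, which reduces to the pointwise identity $F(D^2 P_\infty) = f(0)$ since $P_\infty$ is smooth. Setting
\[
P(x) := P_\infty(x) + \frac{u(0) + \eta_k x \cdot Du(0)}{\eta_k^2}
\]
gives a quadratic polynomial with $D^2 P = D^2 P_\infty$, hence $F(D^2 P) = f(0)$, and
\[
\sup_{B_1} \left| \frac{u(\eta_k x)}{\eta_k^2} - P(x) \right| = \sup_{B_1} |\tilde v_k - P_\infty| \to 0,
\]
contradicting the failure hypothesis for large $k$. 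The main technical obstacle is the VMO-to-constant passage for the limit Hessian: one must combine weak-$*$ convergence with vanishing oscillation of $D^2 \tilde v_k$ at every fixed scale, since $v_\infty$ is only known a priori to be $C^{1,1}$ and strong Hessian convergence is unavailable. Everything else --- compactness, viscosity stability, and the affine adjustment producing $P$ from $P_\infty$ --- is routine.
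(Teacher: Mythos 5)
Your compactness argument is a genuinely different route from the paper's constructive one. The paper fixes $r$, solves the Dirichlet problem $\Delta h = s_r$ with boundary data $w_r := u(r\cdot)/r^2$, and uses the ABP maximum principle together with the VMO modulus to get $\|w_r - h\|_{L^\infty} \lesssim \omega^{1/n}(r)$; taking the second-order Taylor jet $\bar P$ of $h$ at the origin and correcting it via Lemma \ref{rrr} produces a quadratic $\bar{\bar P}$ exactly satisfying $F(D^2\bar{\bar P}) = f(0)$, and a final zoom by $\rho$ drives the error below $\varepsilon_0$, giving $\eta = r\rho$ explicitly. Your blowup-by-contradiction argument avoids harmonic replacement entirely; the VMO-to-constant passage you identify as the crux --- combining weak-$*$ convergence of $D^2\tilde v_k$ with the scale-by-scale vanishing of mean oscillation from the change-of-variables bound
\[
\frac{1}{|B_r(x_0)|}\int_{B_r(x_0)}\bigl|D^2\tilde v_k - (D^2\tilde v_k)_{x_0,r}\bigr| \le \omega(r\eta_k)
\]
--- is the right idea and, as a mechanism, is correct.

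There is, however, a genuine gap in the negation. You fix a single $u$ (and $F$, $f$) and argue that failure yields a sequence $\eta_k \to 0$ with no admissible quadratic for that $u$. That only produces an $\eta$ that depends on the particular solution $u$. The proposition asserts $\eta = \eta(n, \omega, K, \|f\|_{C^{\alpha}}, \varepsilon_0)$, uniform over all solutions with the stated Hessian bound $K$ and VMO modulus $\omega$, and this uniformity is what propagates cleanly into Lemma \ref{lemma 2.6} and Theorem \ref{thm:regF}. To repair your argument, one must take sequences $(u_k, f_k)$ satisfying the uniform bounds together with $\eta_k\to 0$ such that no admissible quadratic is $\varepsilon_0$-close to $u_k(\eta_k\cdot)/\eta_k^2$. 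The uniform $C^{1,1}$ and VMO bounds still deliver a quadratic limit $P_\infty$ exactly as in your argument; the uniform $C^\alpha$ bound gives $f_k(\eta_k\cdot)\to c_\infty$ locally uniformly for some subsequential constant $c_\infty$; and since $f_k(0)\to c_\infty$ as well while $F$ is uniformly elliptic, one can adjust $P_\infty$ by a small multiple of $|x|^2$ to get $F(D^2 P_k) = f_k(0)$ exactly with $\|P_k - P_\infty\|\to 0$, still contradicting the failure assumption. The paper's direct construction gives the required uniform dependence without this extra bookkeeping.
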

\begin{proof}
    For $r>0$ to be chosen later, let $w_r(x)=\frac{1}{r^2}u(rx)$ and \begin{align*}
        s_r=Tr\frac{1}{|B_1|}\int_{B_1}D^2w_r=Tr\frac{1}{|B_r|}\int_{B_r}D^2u.
    \end{align*} Note that $|s_r|\leq n|D^2u|_{L^{\infty}(B_1)}=nK.$ We solve the boundary value problem 
\begin{align}
\Delta h(x)   =s_r\text{ in }B_{1}\nonumber\\
h(x)   =w_r\text{ on }\partial{B_{1}}. \nonumber
\end{align} Using the Alexandrov-Bakelman-Pucci maximum principle \cite[Theorem 3.2]{CC}, we observe the following
\begin{align*}
    ||w_r-h||_{L^{\infty}}(B_1)\leq C(n)||\Delta w_r-\Delta h||_{L^{\infty}}(B_1)\\
    \leq C(n,K)[\int_{B_1}|D^2u(rx)-(D^2u)_{0,r}|]^{1/n}\\
    =C(n,K)[\frac{1}{|B_r|}\int_{B_r}|D^2u(x)-(D^2u)_{0,r}|]^{1/n}\leq C(n,K)\omega^{1/n}(r).
\end{align*}
Note that $h$ satisfies the following estimate
\begin{align*}
    ||D^3h||_{L^{\infty}}(B_{1/2})\leq C(n)\sup_{\partial B_1}|w_r(x)-w_r(0)-\langle \nabla w_r(0),x \rangle -\frac{1}{2}s_r|x|^2|\leq C(n,K).
\end{align*}
We define the quadratic part of $h$ at the origin to be $\bar P$.  By Taylor approximation, we obtain
\begin{align*}
    |w_r(x)-\bar P(x)|\leq C(n,K)\omega^{1/n}(r)+C(n,K)|x|^3.
\end{align*}
Since $w_r$ satisfies $F(D^2\omega_r)=f(rx)$, on using Lemma \ref{rrr}, we see that there exists a quadratic polynomial $\bar{\bar{P}}(x)$ such that $F(D^2\bar{\bar{P}})=f(0)$ and 
\[ |w_r(x)-\bar{\bar{P}}(x)| \leq C(n,K)\omega^{1/n}(r)+C(n,K)|x|^3+||f(rx)||_{C^{\alpha}(B_1)}r^{2+\alpha}.
\]
For $\rho>0$ to be chosen later, let $x=\rho y$ and $P(y)=\frac{1}{\rho^2}\bar{\bar{P}}(\rho y)$. Given any $\varepsilon_0>0$, we choose $r,\rho$ depending on $n,K,\omega,||f||_{C^{\alpha}(B_1)},\varepsilon_0$ such that for $\eta=\eta (n,K,\omega,||f||_{C^{\alpha}(B_1)},\varepsilon_0)=\rho r$ we get
\[|\frac{1}{\eta^2}u(\eta y)-P(y)|\leq \varepsilon_0.
\]
\end{proof}

\begin{lemma}
\label{lemma 2.6}There exists $\delta>0$ depending on $n,\alpha,K$ such that if $u$ is a $C^{1,1}$ viscosity solution of (\ref{F}) in
$B_{1}$ with $|D^2u|\le K$ and 
$[f]_{C^{\alpha}(B_{1})}\leq\delta$,
then there exists a quadratic polynomial $P$ such that
\begin{align}
||u-P||_{L^{\infty}(B_{r})}    \leq C_{0}r^{2+\alpha}\hspace{0.2cm},\forall
r\leq1,\nonumber\\
|DP(0)|+||D^{2}P||    \leq C_{0} \label{P1}%
\end{align}
for some constant $C_{0}>0$ depending on $n,\alpha,K$.
\end{lemma}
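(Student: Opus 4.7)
The plan is a standard iteration using Proposition \ref{p} to construct quadratic polynomial approximations of $u$ at geometrically shrinking scales, whose limit will be the desired $P$. Choose $\mu\in(0,1)$ small (to be determined) and set $\varepsilon_0 = \mu^{\alpha}$; apply Proposition \ref{p} to obtain the corresponding $\eta$, then shrink $\mu$ if necessary to arrange $\mu = \eta$. I would then inductively produce quadratic polynomials $\{P_k\}_{k\ge 1}$ satisfying (a) $F(D^2 P_k)=0$, (b) $\|u-P_k\|_{L^\infty(B_{\mu^k})} \le \mu^{k(2+\alpha)}$, and (c) the Hessian increment bound $\|D^2P_{k+1} - D^2P_k\| \le C \mu^{k\alpha}$, with analogous control on the linear and constant parts. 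The base case $P_1$ comes directly from Proposition \ref{p} applied to $u$.

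For the inductive step, given $P_k$, consider the rescaled function
\[
v_k(x) := \frac{(u - P_k)(\mu^k x)}{\mu^{k(2+\alpha)}}, \qquad x \in B_1,
\]
which by (b) satisfies $\|v_k\|_{L^\infty(B_1)} \le 1$ and is a $C^{1,1}$ viscosity solution of
\[
\tilde F_k(D^2 v_k) = \tilde f_k, \qquad \tilde F_k(M) := \mu^{-k\alpha} F(\mu^{k\alpha} M + D^2P_k), \qquad \tilde f_k(x) := \mu^{-k\alpha} f(\mu^k x).
\]
Here $\tilde F_k$ has the same ellipticity constants as $F$ and satisfies $\tilde F_k(0) = 0$ by (a), while $\tilde f_k(0) = 0$ and $\|\tilde f_k\|_{C^\alpha(B_1)} \le \|f\|_{C^\alpha(B_1)} \le \delta$ by a direct scaling computation using $f(0)=0$. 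Applying Proposition \ref{p} to $v_k$ then yields a quadratic $Q_{k+1}$ with $\tilde F_k(D^2 Q_{k+1}) = 0$, and setting $P_{k+1}(z) := P_k(z) + \mu^{k(2+\alpha)+2}\, Q_{k+1}(z/\mu^{k+1})$ closes the induction for (a)--(c) via a routine unscaling. Summing (c) gives convergence $D^2 P_k \to D^2 P$, and (b) passes to the limit as $\|u - P\|_{L^\infty(B_r)} \le C_0 r^{2+\alpha}$ at the dyadic scales $r = \mu^k$, then for all $r \le 1$ by interpolating between consecutive scales.

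The main obstacle will be ensuring that Proposition \ref{p} applies to each $v_k$ with constants uniform in $k$, so that the geometric scale $\eta = \mu$ can be reused at every step. Ellipticity and the $C^\alpha$ norm of $\tilde f_k$ scale perfectly, but $D^2 v_k(x) = \mu^{-k\alpha}(D^2 u(\mu^k x) - D^2 P_k)$ has its $L^\infty$ bound and VMO modulus each formally multiplied by $\mu^{-k\alpha}$, so they do not a priori stay bounded in $k$. Handling this delicate point---by exploiting inductive bound (b) together with the VMO smallness of $D^2 u$ at the small scale $\mu^k$ to show that the effective rescaled Hessian data for $v_k$ remains controlled---is where the technical work lies. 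Once this uniformity is secured, the geometric sum of increments (c) produces the Hessian bound $\|D^2P\| \le C_0$, the limit quadratic $P$ satisfies $F(D^2P)=0$ as a limit of (a), and the proof concludes.
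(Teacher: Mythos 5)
The gap you flag at the end is real, and it is fatal to the iteration as you have set it up. In the rescaled function $v_k$ the Hessian is $D^2v_k(x)=\mu^{-k\alpha}\bigl(D^2u(\mu^k x)-D^2P_k\bigr)$, so both $\|D^2v_k\|_{L^\infty}$ and its VMO modulus pick up the unbounded factor $\mu^{-k\alpha}$; there is no a priori interior $C^{1,1}$ bound for viscosity solutions of a general $F\in C^2$ (no concavity is assumed), and VMO of $D^2u$ at the small scale $\mu^k$ does not cancel $\mu^{-k\alpha}$, since VMO gives no quantitative rate. So Proposition \ref{p} cannot be invoked at every scale with constants uniform in $k$, and your inductive step does not close.

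The paper resolves this by using Proposition \ref{p} \emph{exactly once}, to produce the base-case quadratic $P_1$ with $F(D^2P_1)=0$ and $\|u-P_1\|_{L^\infty}$ small. After that it switches to a different, VMO-free iteration (Claim \ref{claim_CL3}): at each step one rescales to $v_i$, compares $v_i$ against the solution $h$ of the \emph{linear constant-coefficient} equation $DF_i(0)\cdot D^2h=0$ with boundary data $v_i$, controls the error via ABP together with the interior H\"older estimate \cite[Proposition 4.10]{CC} and the $C^2$-smoothness of $F$ (the term $O(|D^2F|\,|D^2h|^2\mu^{i\alpha})$ in \eqref{ref}), and then applies Lemma \ref{rrr} to correct the Taylor quadratic of $h$ to one satisfying $F_i(D^2\bar{\bar P})=f(0)=0$. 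That iteration needs only uniform ellipticity, $F\in C^2$, and $\|f_i\|_{C^\alpha}\le\delta\mu^{-m}$ --- all of which do scale uniformly in $i$ --- and so it closes. To repair your argument you would have to replace the repeated use of Proposition \ref{p} by such a linear-comparison step; as written, the proposal leaves the central step unproved.
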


\begin{proof}
The proof follows from the following claim.

\begin{claim}
\label{claim_CL3} 
Given the assumptions of the above Lemma, there exist $0<\mu<1,m$ depending on $n,K,\alpha$ such that if
\begin{align}
    ||u-P||_{L^{\infty}(B_1)}\leq \mu^{2+\alpha+m}\nonumber\\
    F(D^2P)=f(0) \label{a}
\end{align}
then we have a sequence $P_{k}(x)=a_{k}+b_{k}\cdot x+\frac{1}{2}x^{t}c_{k}\cdot x$ satisfying
\begin{align}
&F(D^{2}P_{k})    =f(0)\label{L1b}\\
&||u-P_{k}||_{L^{\infty}(B_{\mu^{k}})}   \leq\mu^{k(2+\alpha)+m}\label{L1a}\\
&|a_{k}-a_{k-1}|+\mu^{k-1}|b_{k}-b_{k-1}|+\mu^{2(k-1)}|c_{k}-c_{k-1}|    \leq
C(n,K)\mu^{(k-1)(2+\alpha)+m}. \label{L1}%
\end{align}

\end{claim}

\begin{proof}
Let $P_{1}=P$. Then for $k=1$, the claim holds trivially. For $\mu$ determined by (\ref{tin}), we show
that if (\ref{L1a}) holds for $k=i$, then it holds good for $k=i+1.$
We define
\begin{align}
v_{i}(x)    =\frac{(u-P_{i})(\mu^{i}x)}{\mu^{i(2+\alpha)+m}}\label{vedf}\\
F_{i}(N)    =\frac{F(\mu^{i\alpha+m}N+c_{i})}{\mu^{i\alpha+m}}\nonumber\\
f_{i}(x)    =\frac{f(\mu^{i}x)}{\mu^{i\alpha+m}}\nonumber
\end{align}
where $P_{i}(x)=a_{i}+b_{i}\cdot x+\frac{1}{2}x^{t}\cdot c_{i}%
x$. So far we have
\begin{align}
F_{i}(D^{2}v_{i}(x))=f_{i}(x) \text{ and } \left\Vert v_{i}\right\Vert _{L^{\infty}(B_{1})}\leq C(n,K).\label{V} 
\end{align}
 Denoting $\omega_{n}$ as the volume of a unit ball in $n$ dimensions we observe that 
\begin{align}
||f_{i}-f_i(0)||_{L^{n}(B_{1})}   =\mu^{-i\alpha-m}\mu^{-i}||f-f(0)||_{L^{n}(B_{\mu^{i}}%
)}
  \leq\mu^{-i\alpha-m}\mu^{-i}\left\vert B_{\mu^{i}}\right\vert ^{1/n}\delta
\mu^{i\alpha}=\mu^{-m}\omega_{n}  ^{1/n}\delta\nonumber.
\end{align}

Let $h$ be the solution of the boundary value problem where $0$ is in the Hessian space:
\begin{align}
DF_{i}(0)\cdot D^{2}h   =0\text{ in }B_{1/2}\nonumber\\
h   =v_{i}\hspace{0.4cm}\text{ on }\partial{B_{1/2}}. \nonumber
\end{align}
Next, we observe that 
\begin{align}
    F_{i}(D^2h)=F_i(0)+\sum_{a,b=1}^n (F_{i})_{ab}(0)h_{ab}+O(|D^2F||D^2h|^2\mu^{i\alpha})\nonumber\\
    =\frac{f(0)}{\mu^{i\alpha+m}}+O(|D^2F||D^2h|^2\mu^{i\alpha})\label{ref}.
\end{align}
Using H\"older estimates on $v_i$, with $\beta=\beta(n,K)$, from \cite[Proposition 4.10]{CC} we see that
\[||v_i||_{C^{\beta}(\overline{B_{1/4}})}\leq C(n,K)[1+\mu^{-m}\omega_n^{1/n}\delta ].
\]
By the global H\"older estimate on $h$, we have
\begin{align}
||h||_{C^{2,\beta/2}(\overline{B_{1/2}})}  
 \leq C(n,K)||v_{i}||_{C^{\beta}(\partial B_{1/2})}
 \leq C(n,K)(1+\mu^{-m}
 \delta ). \nonumber
\end{align}
 By the Alexandrov-Bakelman-Pucci maximum principle we have for $\varepsilon>0$ to be chosen later
\begin{align*}
    ||v_{i}-h||_{L^{\infty}(B_{1/2-\varepsilon})}&\leq \sup_{\partial B_{1/2-\varepsilon}}|v_i-h|+C(n,K)||F_i(D^2v_i)-F_i(D^2h)||_{L^{\infty}(B_{1/2-\varepsilon})}\\
    &\leq C(n,K)[(\varepsilon^\beta + \varepsilon^{\beta/2})(1+\mu^{-m}
    \delta )+|D^2 F|\varepsilon^{-4}\mu^{i\alpha+m}+\mu^{-m}
    \delta(1/2-\varepsilon)^\alpha]\\
    &\leq C(n,K)(\varepsilon^{\beta/2}+\mu^{i\alpha+m}\varepsilon^{-4}+\mu^{-m}\delta)
\end{align*}
where the second 
inequality follows from (\ref{ref}). Since $h\in C^{2,\alpha}$, there exists a quadratic polynomial $\bar{P}$ given by
\[
\bar{P}(x)=h(0)+Dh(0)\cdot x+\frac{1}{2}x^{t}D^{2}h(0)\cdot x
\]
such that
\begin{equation*}
||h-\bar{P}||_{L^{\infty}(B_{\mu})}\leq C(n,K)\mu^{3}. 
\end{equation*}
So far we have 
\begin{align*}
||v_{i}-\bar{P}||_{L^{\infty}(B_{\mu})} 
\leq & \,C(n,K)(\e^{\beta/2}+\mu^{i\alpha+m}\e^{-4}+\mu^{-m}\delta)+C(n,K)\mu^3.
\end{align*}
From Lemma \ref{rrr} we see that there exists a quadratic polynomial $\bar{\bar{P}}$ such that \\
$F_i(D^2\bar{\bar{P}})=f(0)$ and 
\begin{align*}
||v_{i}-\bar{\bar{P}}||_{L^{\infty}(B_{\mu})} 
&\leq C(n,K)(\e^{\beta/2}+\mu^{i\alpha+m}\e^{-4}+\mu^{-m}\delta+\mu^3)+C(n,K)\|f_i\|_{C^\al(B_\mu)}\mu^{2+\al}\\
&\leq C(n,K)(\e^{\beta/2}+\mu^{i\alpha+m}\e^{-4}+\mu^{-m}\delta+\mu^3+\mu^{-m}\delta\mu^{2+\al}).
\end{align*}
Choosing $\e^{\beta/2}$ and $\mu^3$, then $m$, then $\delta$ suitably, we get
\begin{align}
||v_{i}-\bar{\bar{P}}||_{L^{\infty}(B_{\mu})} 
\leq \mu^{2+\alpha}. \label{tin}
\end{align}
Rescaling the above back via (\ref{vedf}) we see that
\begin{equation}
|u(x)-P_{i}(x)-\mu^{i(2+\alpha)+m}\bar{\bar{P}}(\mu^{-i}x)|\leq\mu^{(2+\alpha)(i+1)+m} \text{ for all } x\in B_{\mu^{i+1}}.
\label{L9}%
\end{equation}
We define
\begin{equation}
P_{i+1}(x)=P_{i}(x)+\mu^{i(2+\alpha)+m}\bar{\bar{P}}(\mu^{-i}x) \label{H2}%
\end{equation}
and we have
\begin{equation}
c_{i+1}=c_{i}+\mu^{i\alpha+m}D^2\bar{\bar{P}}. \label{L5s}%
\end{equation}
From (\ref{L9}) we see that
$
||u-P_{i+1}||_{L^{\infty}(B_{\mu^{i+1}})}\leq\mu^{(i+1)(2+\alpha)+m}%
$
which proves (\ref{L1a}) for $k=i+1$, and from(\ref{L5s}) we get
$
F(c_{i+1})=f(0)$, thereby proving (\ref{L1b}). Evaluating (\ref{H2}) and its first and second
derivatives at $x=0$ we get
$
a_{i+1}-a_{i}    =\mu^{i(2+\alpha)+m}\bar{\bar P}(0),
b_{i+1}-b_{i}    =\mu^{i(1+\alpha)+m}D\bar{\bar P}(0),
c_{i+1}-c_{i}    =\mu^{i\alpha+m}D^{2}\bar{\bar P}(0).
$
Thus
\begin{align*}
 |a_{i+1}-a_{i}|+\mu^{i}| b_{i+1}-b_{i}|
+\mu^{2i}| c_{i+1}-c_{i}|\\
  =\mu^{i(2+\alpha)+m}(|\bar{\bar P}(0)|+| D\bar{\bar P}(0)| +|
D^{2}\bar{\bar P}(0)| )\\
  \leq\mu^{i(2+\alpha)+m}C(n,K)%
\end{align*}
proving (\ref{L1}). This proves claim \ref{claim_CL3}.
\end{proof}
 We observe that  
assumption (\ref{a}) in the above claim follows from a rescaling of the result in Proposition \ref{p} applied to the rescaled function $u(\eta x)/\eta^2$. The remaining proof of the Lemma follows from a standard iterative argument (see \cite{Y}).

\end{proof}

\begin{proof}
[Proof of Theorem \ref{thm:regF}] We may assume $u(0)=Du(0)=0$. We will first
prove that the estimate (\ref{AAA}) holds at the origin. We
show that there exists a quadratic polynomial $P$ such that
\begin{align}
||u-P||_{L^{\infty}(B_{r})}   \leq Cr^{2+\alpha}\hspace
{0.2cm},\forall r\leq1\nonumber\\
|DP(0)|+||D^{2}P||   \leq C \label{P2}%
\end{align}
where $C=C(n, K,\alpha,\left\Vert f\right\Vert _{C^{\alpha}(B_{1})})$.
The proof now follows directly from Lemma \ref{lemma 2.6}, if  for all $x\in B_{1}$ we rescale 
$
\tilde{u}(x)=\tau^{-2}u(\tau x), 
$
so that $\tilde u$ solves 
$$
F(D^2\tilde u(x))=f(x/\tau)=:f_\tau(x).
$$
The H\"older seminorm becomes $$
[f_\tau]_{C^\alpha(B_1)}\le \tau^{-\alpha}[f]_{C^\alpha(B_1)}\le\delta
$$
if $\tau^\alpha=[f]_{C^\alpha(B_1)}/\delta$ is large enough.  Therefore the equation $F(D^{2}\tilde{u}(x))=f_{\tau}(x)$
satisfies all the conditions of Lemma \ref{lemma 2.6} and hence the function
$\tilde{u}$ satisfies the estimates in (\ref{P1}). 
In particular, there exists
$\tilde{P}$ such that
\begin{align}
||\tilde{u}-\tilde{P}||_{L^{\infty}(B_{r})}    \leq C(n, K,\alpha,\left\Vert f\right\Vert _{C^{\alpha}(B_{1})})r^{2+\alpha},\text{
}\hspace{0.2cm}\forall r\leq1\nonumber\\
|D\tilde{P}(0)|+||D^{2}\tilde{P}||    \leq C(n, K,\alpha,\left\Vert f\right\Vert _{C^{\alpha}(B_{1})})
\nonumber
\end{align}
which proves (\ref{P2}) on substituting back to the original function. Similarly, we prove this holds at every point $x_0\in B_{1/2}$, which in turn proves (\ref{AAA}).
\end{proof}

\section{Generalizing the constant rank theorem}
\label{sec:rank}

In \cite{BG, CGM, SW}, there are versions of the following constant rank theorem. If $\Omega\subset\re^n$ and equation $G(D^2u,Du,u,x)=0$ is an elliptic $C^2$ operator on $Sym^+(n)\times\re^n\times\re\times\Omega$ satisfying ``inverse convexity"
\eqal{
\label{ic}
(A,s,x)\mapsto G(A^{-1},p,s,x)\text{ is locally convex for each fixed }p\in\re^n,
}
and $u$ is a $C^3$ solution, then the rank of the Hessian $D^2u(x)$ is constant in $\Omega$.  We generalize this to $G(\,.\,,\,v)=0$, where $v:=x\cdot Du-u$ 
can be interpreted as the pullback of the Legendre transform
$u^*(\bar x)=x\cdot\bar x-u(x)$ under the gradient mapping $x\mapsto Du$, or $v=u^*\circ Du$.  This new dependence is nontrivial, since unlike for the $x$ or $u$ variables, $G$ need not be convex in $v$.


\begin{theorem}
\label{thm:rank}
 Let $G(M,p,s,x,v)$ be $C^{2}$,  elliptic
 \eqal{
 G^{ab}:=\frac{\pd G}{\pd M_{ab}}(M,p,s,x,v)>0,
 }
 and inverse convex
 \eqal{
 \label{ic1}
 (A,s,x)\mapsto G(A^{-1},p,s,x,v)\text{ is locally convex for each fixed }(p,v)\in \re^n\times\re.
 }
 If $u$ is a $C^{3}$ convex solution of $G(D^2u,Du,u,x,x\cdot Du-u)=0$ in $B_1$, then $D^2u(x)$ has constant rank in $B_1$.
\end{theorem}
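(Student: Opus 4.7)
The plan is to adapt the Caffarelli-Guan-Ma and Bian-Guan strategy. Set $\ell = \min_{B_1}\text{rank}(D^2 u)$; if $\ell = n$ there is nothing to prove, so assume $\ell < n$ and let $x_0 \in B_1$ achieve the minimum. Construct the auxiliary function $\phi$ from the first nonvanishing elementary symmetric functions $\sigma_k$ of the eigenvalues of $D^2u$, in the Bian-Guan formulation
\[
\phi(x) = \sigma_{\ell+1}(D^2 u(x)) + q(D^2u(x)),
\]
where $q$ is the standard smooth Bian-Guan extension of $\sigma_{\ell+2}/\sigma_{\ell+1}$; then $\phi \geq 0$ on $B_1$ and $\phi(x_0) = 0$. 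The objective is the differential inequality
\[
G^{ab}(D^2u, Du, u, x, v)\,\phi_{ab}(x) \leq C\bigl(\phi(x) + |\nabla \phi(x)|\bigr)
\]
on a neighborhood of $x_0$, after which the strong minimum principle forces $\phi \equiv 0$ near $x_0$, hence $\text{rank}(D^2u) \leq \ell$ there, hence $= \ell$ by minimality, and finally $= \ell$ throughout $B_1$ by a connectedness argument.

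Differentiating $G(D^2u, Du, u, x, v) = 0$ once in $e_i$, and using the identity $v_i = x_k u_{ki}$, yields
\[
G^{ab} u_{abi} + G_{p_k} u_{ki} + G_s u_i + G_{x_i} + G_v\, x_k u_{ki} = 0,
\]
so the new first-order contribution $G_v\, x_k u_{ki}$ is structurally identical to $G_{p_k}u_{ki}$ and feeds harmlessly into the $|\nabla \phi|$ side of the inequality. A second differentiation, paired with $G^{ab}$, produces an expression for $G^{ab}\phi_{ab}$ whose dangerous terms are the quadratic second derivatives $G^{ab,cd}u_{abi}u_{cdj}$, $G^{ab,s}$, $G^{ab,x_i}$, $G_{ss}$, $G_{sx_i}$, $G_{x_ix_j}$ together with their mixed pieces. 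The inverse convexity hypothesis \eqref{ic1} on $(A,s,x)$ for fixed $(p,v)$, combined with the Caffarelli-Guan-Ma reorganization via $A = (D^2u)^{-1}$, turns this entire $(A,s,x)$-block into a nonnegative quadratic form that may be discarded with the correct inequality sign.

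The genuinely new terms are those involving derivatives in $v$, namely $G_{vv}v_iv_j$, $G_{vp_k}v_iu_{kj}$, $G_{vs}v_iu_j$, $G_{vx_j}v_i$ and the cross pieces $G^{ab,v}$. No convexity sign in $v$ is assumed, so these must be absorbed as errors. The key observation is once more the slaving identity $v_i = x_k u_{ki}$: on the small eigenblock of $D^2u(x_0)$ isolated by $\phi_{ab}$, each factor $v_i$ carries a factor of a small eigenvalue, so every $v$-coupled term inherits at least one small-eigenvalue factor, and hence is dominated either by $\sigma_{\ell+1}(D^2u)$ (absorbed into $C\phi$) or by a first derivative of a small eigenvalue (absorbed into $C|\nabla\phi|$). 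This is precisely why convexity in $v$ can be dropped: the $v$ variable is slaved to $D^2u$ in a way that automatically generates the required small-eigenvalue factors. The main obstacle will be the bookkeeping: every $G_v$-involving term in the Bian-Guan computation must be tracked through the small-block reduction and attributed to either $\phi$ or $|\nabla\phi|$ via $v_i = x_k u_{ki}$. Once the differential inequality is established, the proof concludes as in \cite{BG, CGM}.
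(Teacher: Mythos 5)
Your proposal takes a genuinely different route from the paper's. You propose to re-run the full Bian--Guan/Caffarelli--Guan--Ma machinery with the auxiliary function $\phi=\sigma_{\ell+1}+q(D^2u)$, track every $v$-coupled term that appears upon twice differentiating $G=0$, and show that each one is absorbed into $C(\phi+|D\phi|)$ using the slaving identity $v_i=x_ku_{ki}$. The paper instead makes a \emph{pointwise replacement} of the operator: at the base point $x_0$ it defines a new elliptic operator
\[
\tilde G(M,p,s,x):=G(M,p,s,x,x_0\cdot p-P(x_0))+\pd_vG|_{x_0}\bigl(s-\ell(x)\bigr),
\]
observes that $\tilde G$ and $G$ have the \emph{same first and second jets} at $x_0$ when evaluated along $P$ (this uses both $\pd_iv=x_0\cdot DP_i$ and the second-order slaving $\pd_{ij}v=x_0\cdot DP_{ij}+P_{ij}$), checks that $\tilde G$ still satisfies inverse convexity in $(A,s,x)$ because the new $v$-argument only depends on $p$ and the correction is affine in $s,x$, and then quotes the existing constant-rank estimate for $\tilde G$. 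This sidesteps re-opening the Bian--Guan computation entirely. Both proofs rest on the same underlying observation -- that $v$ is slaved to $(Du,u)$ through the gradient map -- but the paper packages it once and for all as an operator substitution, while yours requires repackaging it term by term inside the differential inequality.

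That said, your write-up leaves a genuine gap you yourself flag: the bookkeeping is not carried out. Two points in particular are imprecise. First, inverse convexity of $(A,s,x)\mapsto G(A^{-1},p,s,x,v)$ at fixed $(p,v)$ controls only the $(A,s,x)$ \emph{block} of the Hessian of $G$; the quadratic form that actually appears from twice differentiating $G(D^2u,Du,u,x,v(x))=0$ involves the full Hessian applied to the lifted tangent vector $(\pd_\alpha D^2u,\,\pd_\alpha Du,\,\pd_\alpha u,\,e_\alpha,\,\pd_\alpha v)$, including cross terms $G_{M_{ab},v}$, $G_{s,v}$, $G_{x,v}$, and the $(p,v)$-block. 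These cross terms do \emph{not} have a sign and cannot simply be ``discarded''; they must all be shown small via the slaving identity, which is more than the $(A,s,x)$-block being PSD. Second, your list of ``genuinely new terms'' omits the first-order piece $G_v\,v_{ij}$ coming from $\langle DG,\pd_{ij}\vec P\rangle$. Since $v_{ij}=x_ku_{kij}+u_{ij}$, this contributes a bare third-derivative term $G_v\,x_ku_{kij}$ with \emph{no} small-eigenvalue factor in front; it has to be recognized as a piece of $D\phi$ (or of the $|DQ^\ell|$ terms in the Szekelyhidi--Weinkove formulation) rather than as a term that is ``small by slaving.'' This is exactly the term the paper's second-order jet identity $\pd_{ij}v=x_0\cdot DP_{ij}+P_{ij}$ is built to absorb, and without it your differential inequality does not close. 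The approach can likely be made to work, but to claim the theorem you would need to carry the $v$-terms through the entire CGM/BG symmetrization and verify the inequality, which is precisely the work the paper's $\tilde G$-replacement avoids.
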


\begin{remark}
\label{rem:nou}
Defining an equation without extra $u$ dependence,
\eqal{
H(D^2u,Du,x,x\cdot Du-u)=G(D^2u,Du,u,x,x\cdot Du-u)
}
we see that $H$ is elliptic and inverse convex in the $A,x$ arguments, with no restrictions on the $p,v$ arguments.  In fact, Theorem \ref{thm:rank} holds if and only if the functionally dependent convex $u$ argument in $G$ is eliminated.
\end{remark}

\begin{proof}
To generalize the proof from \cite{SW}, it suffices to confirm there are nonnegative supersolutions of $G^{ab}\pd_{ab}$ which vanish according to the rank of $D^2u(x)$, since the Harnack inequality implies such vanishing occurs everywhere.  The rest of the proof in \cite{SW} involves a $W^{4,n+}$ approximation maneuver that ensures the eigenvalues of $D^2u(x)$ are distinct away from a closed null set and adapting the Harnack inequality to an approximate setting.  We refer therein and omit these details, which are the same in our generalized setting.

\medskip
Let $P(x)$ be a polynomial such that $D^2P(x)$ has distinct positive eigenvalues $0<\La_1<\cdots<\La_n$ near $x=x_0$.  We assume $D^2P(x_0)$ is diagonal with $\La_i=P_{ii}$.  Given integer $k\in[1,n]$, we put
\eqal{
Q^k=\La_k+2\La_{k-1}+\cdots+k\La_1=\sum_{j=1}^k(k+1-j)\La_j.
}
Our objective is to establish the following supersolution recursion estimate, pointwise at $x_0$:
\eqal{
\label{super}
G^{ab}Q^k_{ab}\le C\sum_{\ell=1}^k(Q^\ell+|DQ^\ell|+|\pd_{\ell\ell}G|),
}
where $C$ depends on the second order Taylor polynomial of $G$ at $P(x_0)$ and the third order Taylor polynomial of $P$ at $x_0$, and $\pd_{\ell}G$ is computed at $P(x)$.  In the case that $P$ approximates $u$ solving $G=0$ in $W^{4,n+}$, then the final term is small in $L^{n+}$.  This will verify the approximate supersolution inequality in \cite[equation (2.5)]{SW} for the case of $\pd_vG=0$, accounting for their induction hypothesis \cite[equation (2.4)]{SW} that $Q^\ell,DQ^\ell$ are small for $\ell<k$.

\medskip
A direct calculation is straightforward, as in \cite{SW}, and the idea is that any $v$ contributions directly absorb into the $Q^k$ and $DQ^k$ terms.  Instead, we will show that the $v$ dependence can be replaced at $x_0$ by $p$ dependence and affine $u,x$ dependence, then invoke the calculation from \cite{SW}.  This technique uses the pointwise nature of \eqref{super}.  The first two jets of $v=x\cdot DP-P$ at $x_0$ can be expressed in terms of $DP$ and affine $P$:
\eqal{
\label{jet}
&\pd_iv=x_0\cdot DP_i=\pd_i(x_0\cdot DP),\\
&\pd_{ij}v=x_0\cdot DP_{ij}+P_{ij}=\pd_{ij}(x_0\cdot DP)+\pd_{ij}P.
}
Accordingly, we define a new elliptic operator by replacing $v$ with a $p$-dependent function, and adding an affine-in-$s,x$ term:
\eqal{
\label{newG}
\tilde G(M,p,s,x):=G(M,p,s,x,x_0\cdot p-P(x_0))+\pd_vG|_{x_0}(s-\ell(x)),
}
where $\ell(x)=P(x_0)+(x-x_0)\cdot DP(x_0)$.  For $G$ and $\tilde G$ evaluated at $P(x)$, we have $\pd_j\tilde G=\pd_j G$ at $x_0$ since the second term is $O(x-x_0)^2$, while $v=x_0\cdot DP(x)-P(x_0)+O(x-x_0)^2$, by \eqref{jet}.  Moreover, $\pd_{ij}\tilde G=\pd_{ij}G$ at $x_0$, since we need only check the $v_{ij}$ terms:
\eqal{
\pd_{ij}(\tilde G-G)\stackrel{x_0}{=}G_v\,\pd_{ij}(x_0\cdot DP)+G_v\,\pd_{ij}\,P-G_v\pd_{ij}v,
}
which vanishes by \eqref{jet}.  
Meanwhile, since the second term in \eqref{newG} is affine, and the final argument of $G$ only uses $p$, it follows that $\tilde G$ satisfies inverse convexity \eqref{ic}.  We deduce that \eqref{super} holds using $\tilde G$.  We thus obtain
\eqal{
G^{ab}Q^k_{ab}&\stackrel{x_0}{=}\tilde G^{ab}Q^k_{ab}\stackrel{\eqref{super}}{\le} C\sum_{\ell=1}^k(Q^\ell+|DQ^\ell|+|\pd_{\ell\ell}\tilde G|)\\
&\stackrel{x_0}{=} C\sum_{\ell=1}^k(Q^\ell+|DQ^\ell|+|\pd_{\ell\ell}G|).
}
Since the second order Taylor polynomial of $\tilde G$ at $P(x_0)$ is controlled by that of $G$ at $P(x_0)$, the constant $C$ has the required uniformity.  This concludes the proof.

\end{proof}

\section{Strong convexity without constant rank theorems}
\label{sec:convex}

The following dual strong convexity criterion is sharp.  Considering the example $U(x)=|x|^{1+\beta}/(1+\beta)$ with $\beta\in(0,1)$, its Legendre transform $U^*(\bar x)=|\bar x|^{1+\beta^{-1}}/(1+\beta^{-1})$ is $C^{2,\al}$ for $\al=\beta^{-1}-1$, or $\beta=1/(1+\al)$, and fails strong convexity at the origin.  We assume $U(x)$ is strictly convex so that $\pd U(B_1)$ is open.

\begin{theorem}
\label{thm:convex}
 Let $U(x)$ be strictly convex on $B_1$ with Legendre transform $U^*(\bar x)\in C^{2+\al-}$ on $\pd U(B_1)$ for some 
 $\al\in(0,2]$.  If also $U(x)\in C^{1,\beta}$ for some $\beta>1/(1+\al)$, then $U^*$ is strongly convex, $D^2U^*>0$.
\end{theorem}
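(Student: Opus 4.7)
The plan is to argue by contradiction using Legendre duality: I would show that degeneracy of $D^2U^*$ at a point forces $U^*$ to be too flat in a direction, so Young's inequality forces $U$ to be too steep in that direction, contradicting $C^{1,\beta}$ once $\beta > 1/(1+\alpha)$. Suppose for contradiction that $D^2U^*(\bar x_0)\,e_1 = 0$ for some unit vector $e_1$ at a point $\bar x_0 \in \partial U(B_1)$, and let $x_0 = DU^*(\bar x_0) \in B_1$ be the dual point, which is well-defined and single-valued since the strictly convex $U$ is $C^{1,\beta}$. Under the affine normalization $U(x)\mapsto U(x+x_0)-U(x_0)-DU(x_0)\cdot x$, with the corresponding translation of $U^*$, I may assume $x_0=\bar x_0=0$, $U(0)=U^*(0)=0$, $DU(0)=DU^*(0)=0$, and $D^2U^*(0)\,e_1=0$.

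The first key estimate is the one-sided flatness $0\le U^*(te_1)\le C_0|t|^{2+\alpha}$ for small $|t|$. For $\alpha\in(0,1)$ this is immediate from the $C^{2,\alpha}$ Taylor expansion and the vanishing of the quadratic along $e_1$. For $\alpha\in(1,2)$, $U^*\in C^{3,\alpha-1}$ and the Taylor polynomial along $e_1$ reduces to $\tfrac{1}{6}D^3U^*(0)(e_1,e_1,e_1)\,t^3$ with remainder $O(|t|^{2+\alpha})$; convexity forces $U^*(te_1)\ge 0$ for both signs of $t$, so the cubic coefficient must vanish, yielding the same bound. With this in hand, Young's inequality $U(x)+U^*(\bar x)\ge x\cdot\bar x$, which follows from the very definition of the Legendre transform, gives
\[
U(se_1)\;\ge\; st - U^*(te_1)\;\ge\; st - C_0|t|^{2+\alpha}
\]
for any $s,t$. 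Optimizing in $t$ with the critical scaling $t\asymp s^{1/(1+\alpha)}$ produces the matching lower bound $U(se_1)\ge c_0|s|^{(2+\alpha)/(1+\alpha)}$ for small $s$, with $c_0>0$ independent of $s$.

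On the other hand, $U\in C^{1,\beta}$ together with $U(0)=DU(0)=0$ gives $U(se_1)\le C_1|s|^{1+\beta}$. The hypothesis $\beta>1/(1+\alpha)$ is exactly $1+\beta>(2+\alpha)/(1+\alpha)$, so combining the two bounds forces
\[
c_0\;\le\; C_1|s|^{1+\beta-(2+\alpha)/(1+\alpha)}\;\to\; 0\quad\text{as }s\to 0,
\]
contradicting $c_0>0$. The step I expect to require the most care is the $\alpha\in(1,2)$ sub-case, where convexity of $U^*$ must be invoked to eliminate the cubic Taylor coefficient in the degenerate direction; the remainder is a clean Young-inequality scaling, and the threshold $\beta>1/(1+\alpha)$ emerges exactly from equating the exponents $1+\beta$ and $1+1/(1+\alpha)$ of the competing bounds on $U(se_1)$.
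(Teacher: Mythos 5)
Your argument is correct, and it takes a somewhat different route from the paper while reaching the same contradiction. The central quantitative step in both proofs is the bound $U^*(te_1)\le C|t|^{2+\alpha}$ along the degenerate direction $e_1$, but the mechanisms differ. The paper perturbs $U^*$ by adding a quadratic $\tfrac12\sum_{j\ge 2}j\bar x_j^2$ to split the eigenvalues of $D^2U^*$, then tracks how the smallest eigenvalue and its eigenvector vary in $\bar x$, integrating by parts twice along $e_1$; this machinery is what forces the ``even if $\alpha\in(1,2)$'' caveat. You bypass all of that by Taylor-expanding $U^*$ directly along $e_1$: for $\alpha\in(0,1)$ the vanishing of the quadratic term in the degenerate direction gives the bound at once, and for $\alpha\in(1,2)$ you use the (correct and clean) observation that convexity of $U^*$ kills the cubic Taylor coefficient, since a nonzero cubic along $e_1$ would force $U^*(te_1)<0$ on one side. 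The second halves are dual: the paper lower-bounds $U^*(\bar x)\ge\tfrac{\beta}{1+\beta}|\bar x|^{1+1/\beta}$ by Legendre-transforming the $C^{1,\beta}$ upper bound on $U$, and contradicts the $|t|^{2+\alpha}$ flatness on the $U^*$ side; you instead push the $U^*$ flatness through Young's inequality $U(se_1)\ge st-U^*(te_1)$, optimize at $t\asymp s^{1/(1+\alpha)}$ to get a lower bound $U(se_1)\gtrsim|s|^{(2+\alpha)/(1+\alpha)}$, and contradict $U\in C^{1,\beta}$ on the $U$ side. These are the same exponent comparison, since $1+\beta>(2+\alpha)/(1+\alpha)$ is equivalent to $1+1/\beta<2+\alpha$. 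On balance your version of the first step is noticeably simpler, avoiding eigenvector perturbation entirely; both versions of the second step are short.
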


\begin{proof}
First, we will show that it suffices to prove the theorem supposing that $U^*\in C^{2+\al}$ instead of $C^{2+\al-}$, with all other hypotheses unchanged.  For if $U^*\in C^{2+\al-}$ and $U\in C^{1,\beta}$ for some $\beta>1/(1+\alpha)$, then also $\beta>1/(1+\alpha')$ for some sufficiently close $\alpha'<\alpha$.  Since $U^*\in C^{2+\al'}$, we deduce that $D^2U^*>0$, as required.

\medskip
Next, we suppose that $U^*\in C^{2+\al}$ with the other given hypotheses.  
Suppose the conclusion is false, or that if $\la^*_i$ are the eigenvalues of $D^2U^*$, then $\la^*_{min}(0)=0$ for $DU(0)=0$ and $U(0)=0$, say.  In general, we have the degenerate case
\eqal{
0=\la^*_1(0)=\cdots=\la^*_k(0)<\la_{k+1}^*(0)\le\cdots\le\la_n^*(0).
}
We may assume $D^2U^*(0)$ is diagonal, with $U^*_{ii}(0)=\la^*_i(0)$.  Let us replace $U^*(\bar x)$ with
\eqal{
\hat U(\bar x):=U^*(\bar x)+\frac{1}{2}(2\bar x_2^2+3\bar x_3^2+\cdots+n\bar x_n^2).
}
Since $D^2\hat U(\bar x)$ is $C^\al$, its spectrum $\hat\la_i(\bar x)$ is nondegenerate and $C^\alpha$ for small $\bar x$:
\eqal{
0\le\hat\la_1(\bar x)<\hat\la_2(\bar x)<\cdots<\hat\la_n(\bar x).
}
Since $\hat\la_1(0)=0$, it achieves a local minimum, so the fact it is $C^\al$ implies the decay
\eqal{
\label{upper}
\hat\la_{min}(\bar x)\le C|\bar x|^{\al},
}
even if $\al\in(1,2]$.  To find a lower bound to compare with, we use $U\in C^{1,\beta}$ to get
\eqal{
U(x)\le \frac{C_1}{1+\beta}|x|^{1+\beta}.
}
After a rescaling $C_1^{-1}U(x)$, we can assume $C_1=1$.  Let us now recall the Legendre transform is order reversing.  For $\bar x\in \pd U(B_1)$,
\eqal{
U^*(\bar x)&=\sup_{x\in B_1}(x\cdot\bar x-U(x))\ge \sup_{x\in B_1}(x\cdot\bar x-\frac{|x|^{1+\beta}}{1+\beta})\\
&=\frac{|\bar x|^{1+\beta^{-1}}}{1+\beta^{-1}},
}
the last equality holding provided $\bar x=|x|^{\beta-1}x$ is solvable for $x\in \overline B_1$, i.e $|\bar x|\le 1$.  Since $DU$ is $C^\beta$, it follows that $\pd U(B_r)\subset B_1$ for some $r$ small enough, so for $\bar x\in \pd U(B_r)$,
\eqal{
\label{lower}
\hat U(\bar x)\ge U^*(\bar x)\ge \frac{\beta}{1+\beta}|\bar x|^{1+\beta^{-1}}.
}
Let us convert \eqref{upper} into an analogous upper bound on $\hat U(x)$.  If $f(0)=f'(0)=0$, then integration by parts yields
\eqal{
\label{ibp}
f(t)&=\int_0^tf'(s)ds=\int_0^t\frac{d}{ds}(s-t)f'(s)ds\\
&=\int_0^t(t-s)f''(s)ds. 
}
Recall that $e_1=(1,0\dots,0)$ is a minimum unit eigenvector of $D^2\hat U(0)$.  Applying \eqref{ibp} to $f(t)=\hat U(t e_1)$ gives
\eqal{
\label{ibp1}
\hat U(te_1)=\int_0^t(t-s)\langle e_1,D^2\hat U(se_1)\cdot e_1\rangle \,ds.
}
From the proof of \cite[Theorem 5.1]{A}, there is a minimum unit eigenvector $e_{min}(A)$ of symmetric matrix $A$ which depends smoothly on $A$ if $spec(A)$ is nondegenerate.  Denoting $e_{min}(\bar x)$ the one for $A=D^2\hat U(\bar x)$ which satisfies $e_{min}(0)=e_1$, we obtain
\eqal{
\|e_{min}(\bar x)-e_1\|&\le C\|D^2\hat U(\bar x)-D^2\hat U(0)\|\\
&\le C|\bar x|^{\min(\al,1)}.
}
The same will be true for the projection $e(\bar x)$ of $e_1$ onto the $e_{min}(\bar x)$ subspace.  That is, denoting $\delta=e(\bar x)-e_1$, we have
$$
\|\delta\|\le C|\bar x|^{\min(\alpha',1)},\qquad \langle\delta,e\rangle=0,\qquad |e|^2=1-|\delta|^2,\qquad D^2\hat U(\bar x)\cdot e(\bar x)=\hat \lambda_{min}(\bar x)e(\bar x).
$$
For $\bar x=s\,e_1$, we substitute these relations into \eqref{ibp1} and obtain
\eqal{
\hat U(te_1)&=\int_0^t(t-s)(\hat\la_{min}(se_1)|e|^2+\langle \delta,D^2\hat U(se_1)\cdot\delta\rangle)\,ds\\
&\le\int_0^t(t-s)\hat\la_{min}(se_1)ds+Ct^{2\min(\al,1)}\int_0^t(t-s)ds\\
&=\int_0^t(t-s)\hat\la_{min}(se_1)ds+Ct^{2+2\min(\al,1)}.
}
Recalling \eqref{upper}, we thus obtain the upper bound
\eqal{
\hat U(te_1)\le Ct^{2+\al}.
}
Combining this with the lower bound \eqref{lower}, we obtain, for small $t$,
\eqal{
Ct^{1+1/\beta}\le Ct^{2+\al}.
}
Since $\beta>1/(1+\al)$ or $2+\al>1+1/\beta$, it follows that we can choose $t$ small and obtain a contradiction.
\end{proof}

\section{Proof of Theorems \ref{thm1} and \ref{thm2}}
\label{sec:proofs}

In both cases, it suffices to establish that $\bar\la_{max}(\bar x)<1$ on $\pd \tilde u(B_1)$.  By comparing with quadratics as in \cite{CSY} and \cite{BS1} and using the order preservation of rotation, we deduce that $\la_{max}(x)<\infty$ on $B_1$, i.e. $u(x)\in C^{1,1}$.  We can then either invoke Evans-Krylov-Safonov theory for now-uniformly elliptic \eqref{slag} to deduce $u\in C^{2+\al-}$, or the approach in \cite{BS1} using the transformation rule
$$
D^2u(x)=\left(\frac{1+t}{1-t}\right)\circ D^2\bar u\circ \bar x(x),
$$
where $\bar x(x)=cx+sDu(x)$, and the regularity for $D^2\bar u(\bar x)$.

\medskip
Defining auxiliary convex functions
\eqal{
\label{aux}
U(x)=su(x)+\frac{c}{2}|x|^2,\qquad \bar U(\bar x)=-s\bar u(\bar x)+\frac{c}{2}|\bar x|^2,
}
where again $c=\cos\pi/4$ and $s=\sin\pi/4$, it follows from rotation formula \eqref{rot} that $\bar U(\bar x)=U^*(\bar x)$.  Since $\bar\la_{max}<1$ if and only if $D^2\bar U>0$, we will show $\bar U(\bar x)$ is strongly convex.  

\medskip
\textbf{Proof of Theorem \ref{thm2}}: If $\psi\in C^\al$ for some 
$\al\in(0,2]$ and $u(x)\in C^{1,\beta}$ for some $\beta>1/(1+\al)$, then rotated regularity Theorem \ref{thm:reg_rot} shows that $\bar u(\bar x)\in C^{2+\al-}$.  Moreover, $U(x)$ in \eqref{aux} is strictly convex, so the fact that the graph $(x,Du(x))$ has large enough exponent $\beta>1/(1+\al)$ allows us to apply strong convexity Theorem \ref{thm:convex} to deduce that $D^2U^*>0$.

\medskip
\textbf{Proof of Theorem \ref{thm1}}: We define a $C^2$ concave replacement of arctangent \eqref{slag} by
\eqal{
A(\la)=\begin{cases}\arctan\la,&\la\ge 0,\\\la,&\la<0\end{cases}.
}
To rewrite \eqref{slag} in terms of auxiliary $U(x)$ in \eqref{aux}, this changes all $u,Du$ arguments by a rescaling and a function of $x$.  The partial convexity condition \eqref{convex} is unchanged by rescaling its arguments or perturbing its arguments by $x$, so by changing $\psi$ to $\psi_1$, we rewrite \eqref{slag} as
\eqal{
\sum_{i=1}^nA(s^{-1}\La_i-1)=\psi_1(x,U,DU),
}
where $\La_i$'s are the eigenvalues of $D^2U(x)$.  Here, $\psi_1$ also satisfies partial convexity \eqref{convex} in its last argument.  Applying the Legendre transform, this equation can be written as
\eqal{
\sum_{i=1}^n-A(\frac{1}{s\bar\La_i}-1)+\psi_1(D\bar U,\bar x\cdot D\bar U-\bar U,\bar x)=0,
}
where $\bar\La_i$'s are the eigenvalues of $D^2\bar U(\bar x)$.  More precisely, $\bar U(\bar x)$ is a viscosity solution of this equation, which is the content of subsolution preservation [Proposition \ref{prop:sub}], and by rotated regularity Theorem \ref{thm:reg_rot}, we know $\bar U(\bar x)\in C^{3}$.  We will apply the constant rank theorem \ref{thm:rank} to this equation, where
\eqal{
G(M,p,x,v):=\sum_{i=1}^n-A(\frac{1}{s\la_i(M)}-1)+\psi_1(p,v,x).
}
This operator is elliptic and inverse convex since $t\mapsto-A(t)$ is convex, see e.g. \cite[Corollary 5.5]{A}.  To verify $G$ is $C^2$ at nonnegative $M$ with zero eigenvalues, we note that as $t\to 0$,
\eqal{
A(\frac{1}{st}-1)=\arctan(\frac{1}{st}-1)=\pi/2-\frac{st}{1-st}+O(t^3),
}
so $A$ is a smooth function on $[0,\infty)$, and it is known, from e.g. \cite[Theorem 5.1]{A}, the smoothness of $G$ follows from the permutation symmetry in the $\la_i$'s.  Thus, by the constant rank theorem, either $D^2\bar U(\bar x)>0$ everywhere, or $\bar\La_{min}=0$ everywhere, in particular.  The latter case is impossible: since $U(x)$ is bounded in $B_1$, we can find a tall quadratic $Q$ touching $U$ from above at some point $p\in B_1$.  
In fact, the Legendre transform is order reversing: if $\bar x\in \pd Q(B_1)\cap \pd U(B_1)$, then
\eqal{
Q^*(\bar x)=\sup_{x\in B_1}(x\cdot\bar x-Q(x))\le \sup_{x\in B_1}(x\cdot\bar x-U(x))=U^*(\bar x).
}
Since contact at $p$ shows that $DQ(p)\in \pd Q(B_1)\cap \pd U(B_1)$, the 
strong convexity of $Q$ and $U$ ensures $\pd Q(B_1)\cap \pd U(B_1)$ contains a ball centered at $\bar p:=DQ(p)$, so $Q^*$ touches $U^*$ at $\bar p$ from below near $\bar p$.  Since $D^2Q<\infty$, we have $0<D^2Q^*\le D^2U^*(\bar p)$, a contradiction.

\section{Singular solution to self similar equation \eqref{self}}
\label{sec:self}

In dimension $n=3$, Wang-Yuan \cite{WangY} constructed a $C^{1,1/3}$ solution $u(x)$ on $B_1$ to special Lagrangian equation \eqref{s1} which is smooth away from the origin $x=0$, originally found in Nadirashvili-Vl\u{a}du\c{t} \cite{NV}.  The idea was to find a polynomial approximate solution $P(\bar x)$ in rotated $(\bar x,\bar y)$ coordinates, invoke the Cauchy-Kovalevskaya Theorem to obtain a full solution, then apply a $U(n)$ transformation to return to $(x,y)$ coordinates.

\smallskip
In this section, we show that this solution, essentially, solves the self-shrinker/expander equation \eqref{self}.  More precisely, in rotated coordinates, we will see that $P(\bar x)$ above approximately solves the associated self-similar equation in rotated coordinates.  Therefore, the self-similar corrections only occur at higher order.

\begin{proposition}
There exists a $C^{1,1/3}(B_r)\cap C^\infty(B_r\setminus\{0\})$ singular viscosity solution to the self-similar Lagrangian mean curvature flow potential equation \eqref{self} for some small $r>0$ and $B_r\subset\re^3$.
\end{proposition}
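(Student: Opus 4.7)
The plan is to adapt the Nadirashvili--Vl\u{a}du\c{t} / Wang--Yuan construction of a singular $C^{1,1/3}$ solution of the constant-phase special Lagrangian equation \eqref{s1} in $\re^3$ to the self-similar equation \eqref{self}. The key observation, flagged in the paragraph before the proposition, is that once one works in Lewy--Yuan rotated coordinates, the self-similar right-hand side becomes a lower-order analytic perturbation that does not disturb the leading singular structure of the Wang--Yuan ansatz. The construction therefore proceeds in three steps: recast \eqref{self} in rotated coordinates, produce an analytic solution via Cauchy--Kovalevskaya around the Wang--Yuan polynomial approximation $P(\bar x)$, then rotate back to obtain the singular potential.

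First I would apply the $\pi/4$ rotation $(\bar x,\bar y)=(cx+sy,-sx+cy)$, under which \eqref{s1} transforms into a rotated special Lagrangian equation for $\bar u$ with shifted phase. Using the identity $x\cdot y=\tfrac{1}{2}(|\bar x|^2-|\bar y|^2)$ together with the Legendre relation $\tilde u(x)+(\tilde u)^*(\bar x)=x\cdot\bar x$ (with $\tilde u(x)=su(x)+\tfrac{c}{2}|x|^2$), the extra term $x\cdot Du-2u$ on the right-hand side of \eqref{self} pulls back to an analytic function $\Phi(\bar x,\bar u(\bar x),D\bar u(\bar x))$, and the rotated equation reads
\[\theta(D^2\bar u)=\bar c+b\,\Phi(\bar x,\bar u,D\bar u).\]
Since the Wang--Yuan polynomial $P(\bar x)$---coming from the explicit degree-$4/3$ homogeneous singular ansatz for $u$---satisfies the unperturbed rotated equation to arbitrarily high order at $\bar x=0$, and since $\Phi$ is analytic, the residual obtained by plugging $P$ into the self-similar rotated equation is still controlled by the same high-order vanishing. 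I would then pose a non-characteristic analytic Cauchy problem on a suitable coordinate hyperplane with initial data read off from $P$, and invoke the Cauchy--Kovalevskaya theorem to produce an analytic solution $\bar v(\bar x)$ of the rotated self-similar equation on $B_r\setminus\{0\}$ agreeing with $P$ to a sufficient order at the origin.

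Finally I would invert the $U(n)$ rotation, applying it to the Lagrangian graph $(\bar x,D\bar v(\bar x))$ to obtain a potential $u(x)$ on some $B_{r'}\subset\re^3$. Because $\bar v-P$ is smooth and flat enough at $\bar x=0$, the unrotated potential inherits the Nadirashvili--Vl\u{a}du\c{t} $C^{1,1/3}$ singularity at the origin and is analytic on $B_{r'}\setminus\{0\}$; it is a classical solution of \eqref{self} off the origin, and a viscosity solution on $B_{r'}$ since a $C^{1,1/3}$ singularity at an isolated point admits no strict one-sided touching by a $C^2$ test function. The main technical obstacle is bookkeeping the claim that the extra term $b\Phi$ genuinely leaves the Wang--Yuan residual estimates intact: one needs the additional contribution, when evaluated on $P$, to be dominated by the same homogeneity scale as the unperturbed residual, which follows from the degree-$4/3$ homogeneity of the ansatz together with the analyticity and vanishing of $\Phi$ at $(\bar x,\bar u,D\bar u)=(0,0,0)$, so that Wang--Yuan's framework applies essentially verbatim.
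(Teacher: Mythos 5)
Your proposal follows the paper's plan: treat the self-similar term as a small perturbation of the Wang--Yuan (constant-phase) construction in rotated coordinates, solve by Cauchy--Kovalevskaya near the Wang--Yuan approximate solution, and rotate back. The one substantive place where your version is imprecise concerns \emph{why} the perturbation is small and \emph{why} the final potential is still self-similar. You appeal to ``analyticity and vanishing of $\Phi$ at $(\bar x,\bar u,D\bar u)=(0,0,0)$,'' but vanishing of $\Phi$ at a point only gives an $O(|\bar x|)$ residual, which is not enough. The correct mechanism is a cancellation: one computes that the Lewy--Yuan rotation preserves the self-similar term, $x\cdot Du-2u=\bar x\cdot D\bar u-2\bar u$, so $\Phi$ is literally $\bar x\cdot D\bar u-2\bar u$; since the Wang--Yuan $\bar u$ has the form $\bar u_2+O(|\bar x|^3)$ with $\bar u_2$ homogeneous quadratic, Euler's identity $\bar x\cdot D\bar u_2-2\bar u_2=0$ kills the leading part, and the residual is $O(|\bar x|^3)$. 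This is exactly what lets Cauchy--Kovalevskaya produce $\bar v$ agreeing with $\bar u$ to order four, preserving the $O(|\bar x|^2)$ angle asymptotics on which the Wang--Yuan singularity construction rests. Relatedly, rather than computing the pullback $\Phi$, the paper avoids it entirely at the last step by noting $\bar v$ generates a self-similar flow $\bar X(\bar x,t)=\sqrt{1-2bt}\,(\bar x,D\bar v(\bar x))$ and that $U(n)$ transformations commute with the time-scaling, so the rotated graph $(x,Dv(x))$ is automatically again a self-shrinker potential (with a shifted phase constant). Finally, your concluding claim that a $C^{1,1/3}$ singularity ``admits no strict one-sided touching by a $C^2$ test function'' is false as stated (test functions can touch from below), and the viscosity-solution verification at the origin is more delicate; the paper simply inherits this from Wang--Yuan's argument, which you should also do rather than assert a general principle.
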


\begin{proof}
Given $\Theta\in[0,\pi/2)$, by the construction in \cite[pg 1165]{WaY}, there is an analytic solution $\bar u(\bar x)$ to special Lagrangian equation \eqref{s1} with phase $c=\pi/2-\Theta$ near $\bar x=0$.  The canonical angles $\bar\theta_i=\arctan\lambda_i(D^2\bar u)$ satisfy near $x=0$, by \cite[Property 3.3, pg 1167]{WaY},
\eqal{
\label{angles}
\bar\theta_1,\bar\theta_2&=(\frac{\pi}{4}-\frac{\Theta}{2})(1+O(\bar x)),\\
\bar\theta_3&=-\frac{1}{\tan(\frac{\pi}{4}-\frac{\Theta}{2})}\,Q(\bar x)(1+O(\bar x)),
}
where $Q(\bar x)=\frac{1}{2}\langle \bar x,D^2Q\cdot\bar x\rangle$ is positive definite.  Note that we may assume $\bar u(0)=0$ and $D\bar u(0)=0$, we can write $\bar u(\bar x)=\bar u_2(\bar x)+O(|\bar x|^3)$, where $\bar u_2$ is a homogeneous quadratic polynomial.  Substituting this into the self-similar operator \eqref{self} yields
\eqal{
\theta(D^2\bar u)-c-b(\bar x\cdot D\bar u-2\bar u)&=-b(\bar x\cdot D\bar u-2\bar u)\\
&=O(|\bar x|^3).
} 
It follows from the Cauchy-Kovalevskaya Theorem that there exists an analytic solution $\bar v(\bar x)$ to the self-similar equation with $c=\pi/2-\Theta$ which agrees with $\bar u(\bar x)$ up to and including $O(|\bar x|^4)$ terms.  Since only these terms contribute to the $O(|\bar x|^2)$ angle asymptotics \eqref{angles}, this does not alter the Wang-Yuan construction, and we can continue their procedure unchanged to obtain a singular viscosity solution $v(x)$.  

\smallskip
It only needs to be verified that the equation $v(x)$ solves is still a self-similar equation since in \cite{WaY}, the special Lagrangian equation is considered. This follows from the representation $\bar X(\bar x,t)=\sqrt{1-2bt}\,(\bar x,D\bar v(\bar x))$ of the self-similar solution we obtain in rotated coordinates.  Under a $U(n)$ transformation of space, the solution still has this separated time dependence, $X(x,t)=U\cdot X(\bar x,t)=\sqrt{1-2bt}\,(x,Dv(x))$, so the spatial potential $v(x)$ still solves a self-similar potential equation, with different constant $c=\Theta$.  Note that the solution is only defined on $B_r$ since the Wang-Yuan solution on $B_1$ arises from one on $B_r$ after a quadratic rescaling $r^2u(\frac{x}{r})$, which is not possible in the self-similar setting.
\end{proof}

\section{Singular solution to rotator equation \eqref{rotator}}
\label{sec:rot}

In dimension $n=1$, Hungerb\"{u}hler-Smoczyk \cite{HS} constructed a global ``Yin-Yang" curve rotating soliton solution of mean curvature flow, $H=(JX)^\bot$.  Locally, its Lagrangian potential will satisfy the rotator potential equation \eqref{rotator}.  For all dimensions, we provide a local construction of this solution near the center of the spiral which manifests a singularity in the potential, for a poor choice of coordinates.

\begin{proposition}
There exists a $C^{1,1/3}(B_r)\cap C^\infty(B_r\setminus\{0\})$ singular viscosity solution to the rotator Lagrangian mean curvature flow potential equation \eqref{rotator} if $a<0$ and $c=n\pi/2$, for some small $r>0$.
\end{proposition}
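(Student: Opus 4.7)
The plan mirrors the proof of Proposition 6.1. Work in rotated coordinates $\bar x$ and begin with the Wang-Yuan \cite{WaY} analytic solution $\bar u(\bar x)$ to the special Lagrangian equation with phase $\pi/2-\Theta$ and Hessian asymptotics \eqref{angles}, normalized so that $\bar u(0)=0$ and $D\bar u(0)=0$. The $U(n)$ rotation $A$ used to return to original coordinates shifts the Lagrangian phase by $\beta=\arg\det A$; choose $A$ so that $\pi/2-\Theta+\beta\equiv n\pi/2\pmod{2\pi}$, which can be realized by a diagonal unitary. Since every $U(n)$ rotation preserves $|z|^2=|x|^2+|y|^2$, we have $|x|^2+|Du|^2=|\bar x|^2+|D\bar u|^2$ on the gradient graph, so the rotator equation \eqref{rotator} retains the form
\eqal{
\sum_{i=1}^n\arctan\bar\la_i=\bar c+\tfrac{a}{2}(|\bar x|^2+|D\bar u|^2),\qquad \bar c=\tfrac{\pi}{2}-\Theta,
}
in rotated coordinates.

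Substituting $\bar u$ into this equation leaves the residual
\eqal{
-\tfrac{a}{2}(|\bar x|^2+|D\bar u|^2)=-\tfrac{a}{2}\bar x^T(I+H^2)\bar x+O(|\bar x|^3),
}
with $H=D^2\bar u(0)$. Unlike the self-similar case, where the combination $\bar x\cdot D\bar u-2\bar u$ vanishes on the quadratic part of $\bar u$, here the residual is only $O(|\bar x|^2)$. To restore the $O(|\bar x|^3)$ residual required as input for Cauchy-Kovalevskaya, I would first construct an explicit homogeneous quartic polynomial $q(\bar x)$ solving the constant-coefficient linearization
\eqal{
\sum_{i=1}^n\frac{q_{ii}(\bar x)}{1+H_{ii}^2}=\tfrac{a}{2}\bar x^T(I+H^2)\bar x
}
obtained by diagonalizing $H$; a quartic solution is routine by a bilinear ansatz $(\bar x^T B_1\bar x)(\bar x^T B_2\bar x)$. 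Then $\bar u+q$ has rotator residual $O(|\bar x|^3)$, so Cauchy-Kovalevskaya applied to the analytic rotator equation yields an analytic solution $\bar v(\bar x)$ agreeing with $\bar u+q$ up to and including order four.

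Rotating the graph $(\bar x,D\bar v(\bar x))$ by $A\in U(n)$ produces a Lagrangian potential $v(x)$ on some $B_r$ solving the rotator equation with constant $c=n\pi/2$. The rotator evolution $X(x,t)=\exp(atJ)(x,Dv(x))$ is a genuine Lagrangian mean curvature flow because $\exp(atJ)$ commutes with the fixed $A$, both lying in $U(n)$, so the separated time dependence in rotated coordinates $\bar X(\bar x,t)=\exp(atJ)(\bar x,D\bar v(\bar x))$ pushes forward covariantly. The $C^{1,1/3}(B_r)\cap C^\infty(B_r\setminus\{0\})$ singular structure and smoothness away from the origin transfer from the Wang-Yuan solution once we verify that the corrections preserve the asymptotic \eqref{angles}.

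That verification is the main obstacle. The quartic correction has $D^2q=O(|\bar x|^2)$, the same order as the degenerate eigenvalue $\bar\theta_3=-\cot(\pi/4-\Theta/2)Q(\bar x)(1+O(\bar x))$, and could in principle cancel the leading positive-definite form $Q$. Since $q$ and the $O(|\bar x|^5)$ Cauchy-Kovalevskaya tail both depend linearly on $a$, the corrected third eigenvalue retains the form $-\cot(\pi/4-\Theta/2)Q'(\bar x)(1+O(\bar x))$ with $Q'$ positive definite provided $|a|$ is small enough. To pass from arbitrary $a<0$ to this regime, one uses the rescaling $\tilde v(\tilde x)=r^{-2}v(r\tilde x)$, which converts rotator parameter $a$ to $ar^2$; the failure of full scale invariance for the rotator equation (the reason Wang-Yuan's quadratic rescaling from $B_r$ to $B_1$ is unavailable here) is precisely why the construction is local on some $B_r$ only, consistent with the statement.
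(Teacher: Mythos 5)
Your proposal diverges fundamentally from the paper's construction, and the divergence is not just cosmetic---it misses the purpose of the proposition. The Wang--Yuan solution underlying your argument is \emph{semiconvex, not convex}: at the origin $\bar\theta_1=\bar\theta_2=\pi/4-\Theta/2$ and $\bar\theta_3=0$, and upon rotating back to original coordinates the total phase becomes the subcritical value $\Theta<\pi/2$, forcing two negative eigenvalues $\lambda_1=\lambda_2<0$. Adding a small quartic correction and invoking Cauchy--Kovalevskaya cannot change this sign structure. But the whole point of Proposition 7.1 (as explained in Remark 1.2.1) is to produce a \emph{convex} $C^{1,1/3}$ singular solution, because it must serve as a sharp counterexample for Theorem \ref{thm2}, whose hypothesis is precisely convexity of $u$. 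A non-convex $C^{1,1/3}$ singular solution to a rotator equation shows nothing about exponent sharpness under the convexity hypothesis; indeed the role of non-convex singular examples is already played by Section \ref{sec:self} in the context of Theorem \ref{thm1}.

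The paper instead uses an entirely different, elementary construction that delivers convexity by design: a radial ansatz $\bar u=f(|\bar x|^2/2)$ for the rotated rotator equation with $f'(0)=1$ and $f''(0)=4a/(n+2)$, then Cauchy--Kovalevskaya to upgrade the degree-four approximation $\bar v=\tfrac12|\bar x|^2+\tfrac{a}{2(n+2)}|\bar x|^4$ to an analytic solution. Because $a<0$ gives $0< D^2\bar u\le I$ near $0$ with equality exactly at $\bar x=0$, the uniform $\pi/4$ rotation produces $D^2u\ge 0$ with a \emph{full-rank} blow-up at the origin, and the Legendre transform of the quartic leading term gives $U(x)=C|x|^{4/3}+O(|x|^2)$, which is $C^{1,1/3}$. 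This works in every dimension $n$, whereas your proposal is locked to $n=3$, the only dimension where Wang--Yuan is available. Even granting those two points, your verification that the quartic correction preserves the positive-definiteness of $Q$ in the asymptotics \eqref{angles} is only sketched, resting on a small-$|a|$ perturbation and rescaling argument that would need to be carried out carefully, since $D^2q$ competes at exactly the same order as the degenerate eigenvalue's leading quadratic form.
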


\begin{proof}
We first construct an approximately radial analytic solution of the following ``rotated" rotator equation with smaller phase constant $c$:
\eqal{
\label{rotrot}
\sum_{i=1}^n\arctan\bar\la_i=n\pi/4+\frac{a}{2}(|\bar x|^2+|D\bar u(\bar x)|^2).
}
A radial solution $\bar u(\bar x)=f(\rho)$, $\rho=|\bar x|^2/2$, will solve the equation
\eqal{
(n-1)\arctan(f'(\rho))+\arctan(2\rho f''(\rho)+f'(\rho))=n\pi/4+a\rho(1+f'(\rho)^2).
}
We choose $f(0)=0$, so setting $\rho=0$ yields $f'(0)=1$.  Differentiating this equation at $\rho=0$ yields
\eqal{
(n-1)\frac{f''(0)}{1+f'(0)^2}+\frac{3f''(0)}{1+f'(0)^2}=a(1+f'(0)^2).
}
Thus $f''(0)=4a/(n+2)$.  If we denote rotator equation \eqref{rotrot} by $F(\bar x,D\bar u,D^2\bar u)=0$ and our approximate solution by
\eqal{
\bar v(\bar x):=\frac{1}{2}|\bar x|^2+\frac{a}{2(n+2)}|\bar x|^4,
}
then this calculation shows that $F|_{\bar v}=O(\rho^2)$, hence $\bar v(\bar x)$ solves \eqref{rotrot} to $O(|\bar x|^4)$.  It follows from the Cauchy-Kovalevskaya Theorem that there exists an analytic solution $\bar u(\bar x)$ to \eqref{rotrot} on a small ball $B_{\bar r}$ which agrees with $\bar v(\bar x)$ until $O(|\bar x|^6)$ terms.  

\smallskip
We now perform the inverse rotation operation $z=e^{i\pi/4}\bar z$.  Let us first verify that a local potential exists.  The auxiliary function $\bar U(\bar x)$ in \eqref{aux} has the expansion
\eqal{
\bar U(\bar x)=-\frac{as}{2(n+2)}|\bar x|^4+O(|\bar x|^6).
}
Since $a<0$, we see $\bar U$ is strictly convex and has an inverse Legendre transform $U(x)$ on some small ball $B_r$, with a fractional power leading term:
\eqal{
\label{lead}
U(x)=C|x|^{4/3}+O(|x|^2).
}  
Moreover, since $\bar U(\bar x)$ is strongly convex away from the origin, it follows $U(x)$ is smooth away from the origin.  Defining $u(x)$ in terms of $U(x)$, as in \eqref{aux}, we recall the $(x,Du)$ and Hessian transformation rules \eqref{rotx} and \eqref{angle}.  The Euclidean length $|x|^2+|y|^2$ is preserved under rotations, and the angles $\theta_i=\arctan\la_i$ increase under rotations, so $u(x)$ solves the rotator equation
\eqal{
\sum_{i=1}^n\arctan\la_i=n\pi/2+\frac{a}{2}(|x|^2+|Du(x)|^2),
}
in the classical sense away from the origin.  Moreover, by the expansion \eqref{lead}, it follows that it is also a viscosity solution at the origin.
\end{proof}

Let us note another way to see the ``rotator" property of the potential is preserved.  The rotator potential $\bar u(\bar x)$ leads to a mean curvature flow by $J$, $\bar X(\bar x,t)=e^{atJ}(\bar x,D\bar u(\bar x))$, which commutes with $J$-rotations: $X(x,t):=e^{\frac{\pi}{4}J}\bar X(\bar x,t)=e^{atJ}(x,Du(x))$, where $X(x,0)=e^{\frac{\pi}{4}J}\bar X(\bar x,0)$ is the rotation of the initial submanifold.  This means $X(x,t)$ is also a rotator solution of Lagrangian mean curvature flow, so the initial potential $u(x)$ of $X(x,t)$ solves the rotator potential equation, with a phase larger by $n\pi/4$.

\bibliographystyle{amsalpha}
\bibliography{BiB}

\end{document}